\renewcommand{\baselinestretch}{1.2}
\theoremstyle{plain}
\newtheorem{theorem}{Theorem}[section]
\newtheorem{lemma}[theorem]{Lemma}
\newtheorem{proposition}[theorem]{Proposition}
\newtheorem{corollary}[theorem]{Corollary}
\newcounter{myBackupProp}
\theoremstyle{definition}
\newtheorem{mainthm}[theorem]{Main Theorem}
\newtheorem{definition}[theorem]{Definition}
\newtheorem{example}[theorem]{Example}
\newtheorem{remark}[theorem]{Remark}
\newtheorem{question}[theorem]{Question}
\newtheorem{problem}[theorem]{Problem}
\newtheorem{conjecture}[theorem]{Conjecture}
\def\surjection{\twoheadrightarrow}
\def\Tor{\mathrm{Tor}}		
\def\Aut{\mathrm{Aut}}		
\def\Z{\mathbf{Z}}		
\def\Q{\mathbf{Q}}		
\def\R{\mathbf{R}}		
\def\Res{\mathrm{Res}}		
\def\HH{\mathrm{H}}		
\def\SL{\mathrm{SL}}		
\def\PSL{\mathrm{PSL}}		
\def\St{\mathrm{St}}		
\def\fsys{\mathcal{F}}		
\def\surface{\Sigma}	
\def\vcd{\mathrm{vcd}}	
\def\CC{\mathcal{C}}		
\def\Mod{\mathrm{Mod}}		
\def\Out{\mathrm{Out}}		
\def\Teich{\mathcal{T}}		
\def\AC{\mathcal{A}}		
\def\TB{\Delta}		
\title{Homology of the curve complex and the Steinberg module of the mapping class group}
\author{Nathan Broaddus}
\date{November 3, 2011}
\begin{document}
\maketitle

\begin{abstract}
By the work of Harer, the reduced homology of the complex of curves is a fundamental cohomological object associated to all torsion free finite index subgroups of the mapping class group.  We call this homology group the {\em Steinberg module} of the mapping class group.  It was previously known that the curve complex has the homotopy type of a bouquet of spheres.  Here, we give the first explicit homologically nontrivial sphere in the curve complex and show that under the action of the mapping class group, the orbit of this homology class generates the reduced homology of the curve complex.
\end{abstract}

\tableofcontents



\section{Introduction}
\label{section:introduction}

An {\em orientable Poincar\'{e} duality group} $\Gamma$ of dimension $d$ is a group of type $FP$ whose homology and cohomology are associated via the strong property that
\begin{equation} 
\label{eqn:poincare}
\HH^i(\Gamma; A) \cong \HH_{d - i}(\Gamma; A) \ \ \   \mbox{for any $\Gamma$-module $A$ and $i \in \Z$.}
\end{equation}
Unfortunately, many important groups do not even have finite index subgroups which are orientable Poincar\'{e} duality groups.  For example,  free groups on two or more generators, fundamental groups of nontrivial knot complements, braid groups on three or more strands, $\SL(n,\Z)$ for $n \geq 2$ and mapping class groups of surfaces of genus $g \geq 1$ do not have finite index subgroups which are orientable Poincar\'{e} duality groups.  However, each of these groups does have a finite index subgroup which is a {\em Bieri-Eckmann duality group} \cite{BE1973} (see also \cite[\S 6.1]{Ivanov2002}, \cite[\S VIII.10]{Brown}, \cite[\S 9]{Bieri1976}).

A group $\Gamma$ of type $FP$ is a Bieri-Eckmann duality group, or simply a {\em duality group}, of dimension $d$ if there is a $\Gamma$-module $D$ such that for any $\Gamma$-module $A$ and any $i \in \Z$ we have
\begin{equation}
\label{eqn:bedual}
 \HH^i(\Gamma; A) \cong \HH_{d - i}(\Gamma; D \otimes_{\Z} A).
\end{equation}
Here $\Gamma$ acts on $D \otimes_{\Z} A$ with the diagonal action: $\gamma (x \otimes a) = (\gamma x) \otimes (\gamma a)$ for all $\gamma \in \Gamma$, $x \in D$, and $a \in A$. 
This $\Gamma$-module $D$ is  determined by the duality group $\Gamma$ and is called the {\em dualizing module} of $\Gamma$.  Thus the dualizing module is a fundamental cohomological object associated to any duality group.  It is, in fact, expressible as the top nontrival cohomology group of $\Gamma$ with coefficients in the group ring $\Z \Gamma$.  Hence the dualizing module of the duality group $\Gamma$ with cohomological dimension $d$ is the $\Gamma$-module
\begin{equation*} D \cong \HH^d(\Gamma; \Z \Gamma).
\end{equation*}

 Orientable Poincar\'{e} duality groups may be characterized as duality groups for which the dualizing module is the trivial module $D = \Z$.  In this case Equation~(\ref{eqn:bedual}) simplifies to Equation~(\ref{eqn:poincare}).  Therefore, one may view the dualizing module of $\Gamma$ as a sort of ``error term'' measuring the extent to which a duality group fails to be a Poincar\'{e} duality group.

Let $\Gamma$ be a group with finite index subgroups $\Gamma_1$ and $\Gamma_2$ of finite cohomological dimensions $d_1$ and $d_2$ respectively.  Then $d_1 = d_2$ and $\Gamma$ is said to have {\em virtual cohomological dimension} $\vcd (\Gamma) = d_1$.  

There is a similar invariance of dualizing modules for finite index subgroups.  Let $\Gamma$ be a group with finite index subgroups $\Gamma_1$ and $\Gamma_2$ which are duality groups with dualizing modules  $D_1$ and $D_2$ respectively.  Then $D_1 \cong D_2$ as $(\Gamma_1 \cap \Gamma_2)$-modules.  That is,
\begin{equation*}
\Res^{\Gamma_1}_{\Gamma_1 \cap \Gamma_2}( D_1 )\cong \Res^{\Gamma_2}_{\Gamma_1 \cap \Gamma_2}(D_2).
\end{equation*}

Let $\surface$ denote either the closed genus $g$ surface $\surface_g$ or that surface $\surface_g^1$ with a marked point. Let $\Mod(\surface)$ be the mapping class group of $\surface$. Harvey's \cite[\S 2]{Harvey1981} complex of curves $\CC(\surface)$ (see Definition~\ref{def:cc} below) is a simplicial complex with a natural simplicial action of $\Mod(\surface)$.  Harer has shown that the curve complex has the homotopy type of a wedge sum of spheres of dimension $2g-2$.  The {\em Steinberg module} is the $\Mod(\surface)$-module
\begin{equation*}
\St (\Sigma) = \HH_{2g-2}(\CC(\surface);\Z).
\end{equation*}

Harer \cite[Theorem~4.1]{Harer1986} and Ivanov \cite[Theorem 6.6]{Ivanov1987} in the case of the closed surface have shown that the mapping class group $\Mod(\surface)$ is a virtual duality group, and that the dualizing module for any torsion-free, finite index subgroup of $\Mod(\surface)$ is the Steinberg module $\St(\surface)$.  Thus this module is a fundamental cohomological object associated to the commensurability class of $\Mod(\surface)$.

\subsection{Summary of results}
Again, let $\Sigma \in \{\Sigma_g, \Sigma_g^1\}$.  The purpose of this paper is to initiate an investigation of the (left) $\Mod(\surface)$-module structure of the Steinberg module.
\begin{equation*}
\St(\surface) = \widetilde{\HH}_{2g-2}(\CC(\surface);\Z)
\end{equation*}
Although Harer has proven that the curve complex $\CC(\surface)$ has the homotopy type of a wedge sum of $2g-2$ spheres, no homologically nontrivial sphere was previously known. 
We show that $\St(\Sigma)$ has a large finite generating set with a generator for each topologically distinct way of gluing the sides of a $4g$-gon to get a surface of genus $g$. We show that the standard identification of sides of the $4g$-gon results is a trivial class in $\St(\Sigma)$  (see Remark~\ref{rem:guess} below), but Corollary~\ref{cor:notriv} gives another identification of sides $\phi_0$ which gives a nontrivial class in $\St(\Sigma)$. This is our first main result

\begin{mainthm}
The homology class $[\phi_0] \in \St(\surface_g^1)$ (resp. $[\phi_0] \in \St(\surface_g)$) is nontrivial for $g \geq 1$.
\end{mainthm}

The number of topologically distinct ways of gluing the sides of a $4g$-gon to get a surface of genus $g$ grows quickly in $g$.  Thus the first finite $\Mod(\Sigma)$-module generating set which we give in  Proposition~\ref{prop:st_pres} below is quite large.  It is therefore somewhat surprising that in Theorem \ref{thm:single_gen} below, we show that this large finite generating set may be reduced to a singleton. This is our second main result.

\begin{mainthm}
\label{mthm:single_gen}
For $g \geq 1$ the Steinberg module $\St(\Sigma)$ is a cyclic $\Mod(\Sigma)$-module generated by the single element described in Section \ref{sec:cyclic}.
\end{mainthm}

Main Theorem~\ref{mthm:single_gen} may be compared to a result of Ash-Rudolph \cite[Theorem~4.1]{AR1979} which implies that the reduced homology of the Tits building for $\SL(n,\Q)$ is a cyclic $\SL(n,\Z)$-module.

%

\subsection{The curve complex and duality for mapping class groups}

We briefly overview the reason that the reduced homology of the curve complex is the dualizing module for any torsion-free, finite index subgroup of $\Mod(\surface)$.  The mapping class group acts properly discontinuously on Teichm\"uller space $\Teich(\surface)$, which is diffeomorphic to $\R^{6g-6+2m}$, where $m$ is the number of marked points on $\surface$.  For any torsion-free, finite index subgroup $\Gamma < \Mod(\surface)$ the quotient of $\Teich(\surface)$ by the properly discontinuous, fixed-point-free action of $\Gamma$ is a manifold. This manifold is not compact but can be compactified either by adding certain ``points at infinity'' following Ivanov \cite{Ivanov1989} or by removing a certain open neighborhood of infinity following Harer \cite{Harer1986}.  In either case, the universal cover $\overline{\Teich}$ of this compactified manifold is called a \emph{bordification} of Teichm\"uller space.  One must then show that $\overline{\Teich}$ is contractible and that $\partial \overline{\Teich}$ has the homotopy type of a wedge of spheres of dimension $2g-2$. It then follows \cite[\S 6.4]{BE1973} that the cohomological dimension of $\Gamma$ is $6g-6+2m-(2g-2)-1$ and the dualizing module of $\Gamma$ is $\widetilde{\HH}_{2g-2}(\partial \overline{\Teich};\Z)$.  Harer establishes these results for surfaces with boundary or punctures in \cite{Harer1986}.  In addition, he shows \cite[Lemma 3.2]{Harer1986} that $\partial \overline{\Teich}$ is $\Mod(\surface)$-equivariantly homotopy equivalent to the curve complex.  Ivanov \cite{Ivanov1987} establishes the same for closed surfaces.

\subsection{The Tits building for $\SL(n,\Q)$}
Throughout this paper we will maintain the viewpoint that the homology of the curve complex should be viewed as an analog for the mapping class group of the homology of the rational Tits building for $\SL(n,\Z)$.  Briefly, the rational Tits building $\TB(n,\Q)$ is the simplicial complex of flags of nontrivial proper subspaces of $\Q^n$ (see \cite[\S V.1 Example 1B]{Brown1998}).  The action of $\SL(n,\Q)$ on $\Q^n$ induces a simplicial action of $\SL(n,\Q)$ on $\TB(n,\Q)$.  For any basis $\mathbf{b}$ for $\Q^n$ the union of all simplices of $\TB(n,\Q)$ whose vertices are subspaces spanned by nonempty proper subsets of $\mathbf{b}$ gives an {\em apartment} of $\TB(n,\Q)$.  Apartments have the homeomorphism type of an $(n-2)$-sphere.

The Steinberg module for $\SL(n,\Z)$ is defined \cite[pg.~437]{BS1973} to be the infinitely generated free abelian group
\begin{equation*}\St(n) = \widetilde{\HH}_{n-2}(\TB(n,\Q);\Z).\end{equation*}
Borel and Serre \cite[Theorem~11.4.2]{BS1973} show that the dualizing module of any torsion-free finite index subgroup of $\SL(n,\Z)$ is $\St(n)$ providing inspiration for Harer's later work \cite{Harer1986} on the mapping class group. 

The Solomon-Tits Theorem ({\it cf.} \cite{Solomon1969}, \cite[\S IV.5 Theorem 2]{Brown1998}) states two things.  Firstly, it says that the Tits building $\TB(n,\Q)$ has the homotopy type of a wedge of infinitely many $(n-2)$-spheres.  Secondly, it says that $\St(n) = \widetilde{\HH}_{n-2}(\TB(n,\Q);\Z)$ is spanned by the homology classes of all apartments.  The action of $\SL(n,\Q)$ is transitive on the set of apartments of $\TB(n,\Q)$ so one sees immediately that $\widetilde{\HH}_{n-2}(\TB(n,\Q);\Z)$ is a cyclic $\SL(n,\Q)$-module.

 In analogy with the first part of the Solomon-Tits Theorem, Harer has shown that the curve complex has the homotopy type of a wedge of $(2g-2)$-spheres.
We prove the analog of the second part in Theorem~\ref{thm:single_gen} below which states that as a $\Mod(\surface)$-module the Steinberg module $\St(\surface)$ is generated by a single element.


Actually, Theorem~\ref{thm:single_gen} more closely resembles a result of Ash-Rudolph \cite[Theorem~4.1]{AR1979} which implies that the reduced homology of the Tits building $\TB(n,\Q)$ for $\SL(n,\Q)$ is a cyclic $\SL(n,\Z)$-module. The action of $\SL(n,\Z)$ is no longer transitive on the set of apartments of $\TB(n,\Q)$, so Ash and Rudolph give a reduction process to rewrite the homology class of the sphere for an arbitrary apartment as a sum of homology classes of spheres of ``integral unimodular'' apartments.  Our analogous reduction process is given in Proposition~\ref{prop:salient} below. 

In Section \ref{sec:notation} below we summarize the results of Harer and others on which or current discussion relys.  In Section \ref{sec:st_pres} we derive a resolution of the Steinberg module which yields a large finite generating set.  Finally, in Section \ref{sec:single_gen} we show that the Steinberg module is generated by a single element.

\medskip
\noindent
{\bf Acknowledgements.} \ 
I owe a huge debt to Juan Souto who pointed out that the best way to find the spheres in the curve complex is to look for balls they bound in Teichm\"uller space.  I am also indebted to Karen Vogtmann for elucidating the work of Harer.  Benson Farb provided immeasurable support and feedback for this project.  I would also like to thank Mladen Bestvina, Matthew Day, Justin Malestein, and Robert Penner for some very helpful discussions.


\section{The Steinberg module, the curve complex and the arc complex}
\label{sec:notation}

Let $\surface_g$ be the surface of genus $g$, and let $\surface_g^1$ be the surface of genus $g$ with $1$ marked point $\ast$.
The {\em mapping class group} $\Mod(\surface_g)$ (resp. $\Mod(\surface_g^1)$) is the group of orientation-preserving self diffeomorphisms of $\surface_g$ (resp. orientation-preserving self diffeomorphisms of $\surface_g^1$ fixing the marked point) modulo diffeomorphisms isotopic to the identity (see \cite{Ivanov2002} for a survey).  An \emph{(essential) curve} in $\surface_g$ is an isotopy class of the image of an embedding of the circle $S^1$ in $\surface_g$ not bounding a disk in $\surface_g$.   An \emph{(essential) curve} in $\surface_g^1$ is an isotopy class of the image of an embedding of $S^1$ in $\surface_g^1 \smallsetminus \{\ast\}$ not bounding a disk or a once-punctured disk in  $\surface_g^1 \smallsetminus \{\ast\}$. Note that for the surface with a marked point, curves may not be isotoped past the marked point.  In either surface, a {\em curve system} is a set of curves (with isotopy class representatives) which can be made to be disjoint.  Since curves are isotopy classes, a curve system cannot have parallel curves. We can partially order curve systems by inclusion.
\begin{definition}[Harvey \cite{Harvey1981}]
\label{def:cc}
For $g \geq 1$ the {\em curve complex} $\CC(\surface_g)$ (resp. $\CC(\surface_g^1)$) for the surface $\surface_g$ (resp. $\surface_g^1$) is the simplicial complex with $n$-simplices corresponding to curve systems with $n+1$ curves and face relation given by inclusion.
\end{definition}
Both $\CC(\surface_g)$ and $\CC(\surface_g^1)$ have the homotopy type of an infinite wedge of spheres of dimension $2g-2$ (see \cite[Theorem~3.5]{Harer1986} and \cite[Theorem~1.4]{IJ2007}).  As stated above, the reduced homology of the curve complex (which is concentrated in dimension $2g-2$) is fundamental to the structure of the mapping class group as a virtual duality group with virtual cohomological dimension $4g-5$ for the closed surface and dimension $4g-3$ for the surface with one marked point \cite[Theorem~4.1]{Harer1986}.
\begin{definition}
\label{def:st_def}
For $g \geq 1$ the {\em Steinberg module} for the mapping class group $\Mod(\surface_g^1)$ is the $\Mod(\surface_g^1)$-module
\begin{equation*}
 \St(\surface_g^1) := \widetilde{\HH}_{2g-2}( \CC(\surface_g^1) ; \Z),
\end{equation*}
and the {\em Steinberg module} for the mapping class group $\Mod(\surface_g)$ is the $\Mod(\surface_g)$-module
\begin{equation*}
 \St(\surface_g) := \widetilde{\HH}_{2g-2}( \CC(\surface_g) ; \Z).
\end{equation*}
\end{definition}

Our aim is to investigate the module structure of the Steinberg modules $\St(\surface_g^1)$ and $\St(\surface_g)$.  A result of Harer cuts our work in half. The mapping class group $\Mod(\surface_g^1)$ acts on the curve complex $\CC(\surface_g)$ by forgetting the marked point.  Hence $\St(\surface_g)$ is naturally a $\Mod(\surface_g^1)$-module.  Harer \cite[Lemma~3.6]{Harer1986} has shown that forgetting the marked point gives a $\Mod(\surface_g^1)$-equivariant homotopy equivalence $\CC(\surface_g^1) \simeq \CC(\surface_g)$.   See the work of Kent, Leininger, and Schleimer in \cite{KLS2009} for more on this projection of curve complexes.  We therefore have the following lemma.
\begin{lemma}[Harer]
\label{lem:same_module}
As $\Mod(\surface_g^1)$-modules $\St(\surface_g^1) \cong \St(\surface_g)$.
\end{lemma}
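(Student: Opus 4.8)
The plan is to leverage Harer's equivariant homotopy equivalence $\CC(\surface_g^1) \simeq \CC(\surface_g)$ and track the induced isomorphism on reduced homology. First I would recall that forgetting the marked point defines a map of simplicial complexes $F \colon \CC(\surface_g^1) \to \CC(\surface_g)$, and that $F$ is $\Mod(\surface_g^1)$-equivariant once we let $\Mod(\surface_g^1)$ act on $\CC(\surface_g)$ through the point-forgetting homomorphism $\Mod(\surface_g^1) \to \Mod(\surface_g)$. This equivariance is essentially formal: a diffeomorphism of $\surface_g^1$ fixing $\ast$ descends to a diffeomorphism of $\surface_g$, and this descent is compatible with the correspondence of curves given by deleting $\ast$.

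Next I would invoke Harer \cite[Lemma~3.6]{Harer1986}, cited in the excerpt, which states precisely that this $F$ is a $\Mod(\surface_g^1)$-equivariant homotopy equivalence. A homotopy equivalence induces an isomorphism on all reduced homology groups, so in particular $F_\ast \colon \widetilde{\HH}_{2g-2}(\CC(\surface_g^1);\Z) \to \widetilde{\HH}_{2g-2}(\CC(\surface_g);\Z)$ is an isomorphism of abelian groups. It remains only to observe that $F_\ast$ is a map of $\Mod(\surface_g^1)$-modules: this follows by naturality of the induced map on homology, since for each $x \in \Mod(\surface_g^1)$ the diagram relating $F$ and the two actions of $x$ commutes on the nose (the action of $x$ on $\CC(\surface_g)$ being, by definition, the action of its image in $\Mod(\surface_g)$). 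Composing $F_\ast$ with the identifications of Definition~\ref{def:st_def} gives the claimed $\Mod(\surface_g^1)$-module isomorphism $\St(\surface_g^1) \cong \St(\surface_g)$.

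There is essentially no obstacle here beyond bookkeeping: the substantive content is Harer's lemma, which we are entitled to assume. The only point requiring a modicum of care is the bookkeeping of group actions — making sure that ``$\St(\surface_g)$ as a $\Mod(\surface_g^1)$-module'' means the pullback along $\Mod(\surface_g^1) \to \Mod(\surface_g)$, and checking that $F$ intertwines this with the given action on $\CC(\surface_g^1)$ — but this is immediate from the definitions. So the proof is short: cite Harer, note equivariance, and apply functoriality of reduced homology.
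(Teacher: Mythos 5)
Your proposal is correct and matches the paper's argument: the paper likewise cites Harer's Lemma~3.6 for the $\Mod(\surface_g^1)$-equivariant homotopy equivalence $\CC(\surface_g^1) \simeq \CC(\surface_g)$ and reads off the module isomorphism on reduced homology, only stating it more tersely. Your additional care about the pullback action on $\CC(\surface_g)$ and naturality is exactly the bookkeeping the paper leaves implicit.
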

This in turn allows an immediate conclusion about the $\Mod(\surface_g^1)$-module structure of $\St(\surface_g^1)$ which is not at all apparent from direct observation of the action of $\Mod(\surface_g^1)$ on $\CC(\surface_g^1)$.
\begin{corollary} 
\label{cor:factors}
The action of $\Mod(\surface_g^1)$ on $\St(\surface_g^1)$ factors through its quotient $\Mod(\surface_g)$.
\end{corollary}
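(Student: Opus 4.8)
The plan is to deduce this directly from Lemma~\ref{lem:same_module} together with the Birman exact sequence. Recall that forgetting the marked point gives the Birman exact sequence
\[ 1 \to \pi_1(\surface_g,\ast) \to \Mod(\surface_g^1) \stackrel{\pi}{\to} \Mod(\surface_g) \to 1, \]
so $\Mod(\surface_g)$ is precisely the quotient of $\Mod(\surface_g^1)$ by the point-pushing subgroup $\pi_1(\surface_g,\ast)$; thus the assertion ``the action factors through $\Mod(\surface_g)$'' means exactly that $\pi_1(\surface_g,\ast)$ acts trivially on $\St(\surface_g^1)$.

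First I would observe that the $\Mod(\surface_g^1)$-module structure on $\St(\surface_g) = \widetilde{\HH}_{2g-2}(\CC(\surface_g);\Z)$ is, by its very definition, pulled back along $\pi$: an element $f \in \Mod(\surface_g^1)$ acts on $\CC(\surface_g)$ by forgetting the marked point, hence through its image $\pi(f) \in \Mod(\surface_g)$ alone, so the induced action on $\widetilde{\HH}_{2g-2}(\CC(\surface_g);\Z)$ factors through $\Mod(\surface_g)$ and $\pi_1(\surface_g,\ast)$ acts trivially. Then I would invoke Lemma~\ref{lem:same_module}, which supplies a $\Mod(\surface_g^1)$-equivariant isomorphism $\St(\surface_g^1) \cong \St(\surface_g)$; transporting the triviality of the $\pi_1(\surface_g,\ast)$-action across this isomorphism shows $\pi_1(\surface_g,\ast)$ acts trivially on $\St(\surface_g^1)$ as well, which is the claim.

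There is essentially no obstacle here: all of the mathematical content sits inside Harer's equivariant homotopy equivalence $\CC(\surface_g^1) \simeq \CC(\surface_g)$ underpinning Lemma~\ref{lem:same_module}. The only point needing a moment's care is the translation between ``factors through the quotient $\Mod(\surface_g)$'' and ``the point-pushing subgroup $\pi_1(\surface_g,\ast)$ acts trivially,'' together with the observation---immediate from the statement of Lemma~\ref{lem:same_module}---that the isomorphism there is equivariant for the whole group $\Mod(\surface_g^1)$ rather than merely for a subgroup.
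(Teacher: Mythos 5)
Your proof is correct and follows exactly the argument the paper leaves implicit: the $\Mod(\surface_g^1)$-action on $\St(\surface_g)$ factors through $\Mod(\surface_g)$ by construction, and Lemma~\ref{lem:same_module} transports this to $\St(\surface_g^1)$. You simply spell out the intermediate step---that factoring through the quotient is the same as the point-pushing subgroup acting trivially---which the paper takes for granted.
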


In light of Lemma~\ref{lem:same_module} we will focus exclusively on the $\Mod(\surface_g^1)$-module structure of $\St(\surface_g^1)$.
Instead of calculating the homology of the curve complex $\CC(\surface_g^1)$ directly, we will work in the {\em arc complex} for $\surface_g^1$ (see Definition~\ref{def:ac} below).  Lemma~\ref{lem:same_module} is especially fortunate since no structure analogous to the arc complex is readily available for the surface $\surface_g$.

An {\em (essential) arc} in $\surface_g^1$ is an isotopy class of the image of an embedded loop based at the marked point $\ast$ which does not bound a disk in $\surface_g^1$.
An {\em arc system} is a set of arcs (with isotopy class representatives) which intersect only at the marked point. As with curve systems, since arc arc systems are sets of isotopy classes, arc systems may not have parallel arcs.  Henceforth we will make no distinction between an arc system and a set of representatives of each arc intersecting only at the marked point.
\begin{definition}[Harer]
\label{def:ac}
The {\em arc complex} $\AC = \AC(\surface_g^1)$ is the simplicial complex with $n$-simplices corresponding to arc systems with $n+1$ arcs and face relation given by inclusion.
\end{definition}
An arc system $\alpha = \{\alpha_0, \cdots, \alpha_n \}$ {\em fills} $\surface_g^1$ if the connected components of $\surface_g^1 \smallsetminus \bigcup \alpha$ are all disks.  The minimum number of arcs needed to fill $\surface_g^1$ is $2g$.  In what follows we will want to keep careful track of the number of arcs in a filling system, so we will say that a filling arc system \emph{$k$-fills} $\surface_g^1$ if the arc system has $2g+k$ arcs.  Notice that a $k$-filling system cuts the surface into $k+1$ disks.
\begin{definition}[Harer]
The {\em arc complex at infinity} $\AC_\infty = \AC_\infty(\surface_g^1)$ is the simplicial subcomplex of $\AC(\surface_g^1)$ which is the union of the simplices of $\AC(\surface_g^1)$ whose vertex sets do not fill $\surface_g^1$.
\end{definition}
Any arc system with fewer than $2g$ arcs cannot fill $\surface_g^1$ so $\AC_\infty(\surface_g^1)$ contains the entire $(2g-2)$-skeleton of $\AC(\surface_g^1)$. 

The name ``arc complex at infinity'' calls for some explanation. A very nice discussion is given in \cite[\S 2]{Harer1988}.  The idea is this.  Using a fundamental construction of Jenkins and Strebel \cite{Strebel1984} it is possible to associate to each point in Teichm\"uller space $\Teich (\surface_g^1)$ (the space of finite area marked complete hyperbolic metrics on the surface $\surface_g^1 \smallsetminus \{\ast\}$) an embedded metric graph in $\surface_g^1 \smallsetminus \{\ast\}$.  One can then homeomorphically identify $\Teich (\surface_g^1)$ with $ \AC(\surface_g^1) \smallsetminus \AC_\infty(\surface_g^1)$ using the arc system in $\surface_g^1$ dual to this graph (see \cite[\S 2]{Harer1988} for details).  Thus one may think of $\AC_\infty(\surface_g^1)$ as extra ``points at infinity'' attached to Teichm\"uller space.

Harer \cite[Theorem~3.4]{Harer1986} defined a continuous map $ \Psi: \AC_\infty (\surface_g^1) \to \CC (\surface_g^1)$ and showed that it is a homotopy equivalence (see \S \ref{sec:curv_sphere} below for more on this map).  Hence, from Definition~\ref{def:st_def} we get another characterization of the Steinberg module
\begin{equation}
\label{eq:inf_def}
 \St(\surface_g^1) \cong \widetilde{\HH}_{2g-2}( \AC_\infty(\surface_g^1) ; \Z).
\end{equation}


\section{A resolution of the Steinberg module}
\label{sec:st_pres}

In this section we will give a resolution of the Steinberg module $\St(\surface_g^1)$ as a $\Mod(\surface_g^1)$-module.
The approach here is analogous to Ash's simplification \cite[\S 1]{Ash1994} of Lee and Szczarba's resolution of the Steinberg module for $\SL(n,\Z)$ given in \cite[\S 3]{LS1976} (see \cite{Gunnells2000} for a nice summary).

We will need the following two results of Harer.
\begin{theorem}[Harer]
\label{thm:har_cont}
 The arc complex $\AC (\surface_g^1)$ is contractible.
\end{theorem}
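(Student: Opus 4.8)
The statement to prove is that $\AC(\surface_g^1)$ is contractible. The standard approach, which I would follow, is a "flow to a maximal arc system" argument, sometimes attributed in this form to Harer (following Mumford–Penner ideas). The plan is to show that $\AC(\surface_g^1)$ deformation retracts onto the union of top-dimensional simplices — but more cleanly, I would prove contractibility directly by exhibiting an explicit contraction using a single "large" filling arc system as an attractor.

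First I would fix once and for all a maximal arc system $\Delta$, i.e. a triangulation of $\surface_g^1$ with all vertices at $\ast$; this is a top simplex of $\AC(\surface_g^1)$ of dimension $6g-4$ (it has $6g-3$ arcs). The key step is the following standard fact about the arc complex: for any arc system $\alpha$ and any fixed maximal system $\Delta$, one can isotope $\alpha$ into minimal position with respect to $\Delta$, and then there is a canonical "surgery" procedure (cutting an arc of $\alpha$ at a point of intersection with $\Delta$ and pushing along $\Delta$) that strictly reduces total intersection number $i(\alpha,\Delta)$ while keeping the result an arc system whose simplex in $\AC$ is joined to both the old simplex and the new one. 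More precisely, I would build an $\ast$-order-preserving poset map (or a simplicial map after barycentric subdivision) that, on the level of the partially ordered set of arc systems, sends each arc system toward $\Delta$, and show this map is homotopic to the identity through a sequence of elementary homotopies supported on stars of simplices. Iterating, every point of $\AC$ is swept into the closed star of $\Delta$, which is contractible since it is a cone.

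The main obstacle — and the part that needs genuine care rather than formal nonsense — is making the surgery move \emph{well-defined} and \emph{continuous} on all of $\AC(\surface_g^1)$, not just on vertices: when $i(\alpha,\Delta)>0$ there may be several "outermost" intersection points to surger, and different choices give different arc systems, so one must either make a canonical choice (e.g. order the arcs of $\Delta$ and always surger along the lowest-index arc of $\Delta$ at the intersection point closest to $\ast$ along a chosen orientation) or else show the resulting homotopies commute up to homotopy. One must also handle the boundary case where surgery could create a null-homotopic or $\ast$-based-trivial loop, or a loop parallel to an existing arc of $\alpha$; in those cases the surgered arc is simply deleted, and one checks the relevant simplices still fit together into a prism over which the homotopy extends. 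Once the combinatorics of this single surgery move are pinned down, the contraction is the (finite, since $i(\alpha,\Delta)$ is a non-negative integer that strictly drops) composition of these moves followed by the cone contraction of $\overline{\mathrm{St}}(\Delta)$.

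I would remark that an alternative, perhaps cleaner, route is to cite the identification (mentioned in the excerpt) of $\AC(\surface_g^1)\smallsetminus\AC_\infty(\surface_g^1)$ with Teichm\"uller space $\Teich(\surface_g^1)\cong\R^{6g-4}$ via Jenkins–Strebel differentials, together with the fact that $\AC$ is obtained from this open piece by attaching $\AC_\infty$, and then invoke Harer's original proof; but for a self-contained argument the surgery-flow above is the natural choice, and the strict monotonicity of geometric intersection number under the surgery move is exactly the finiteness input that makes it terminate.
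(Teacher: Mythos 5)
The paper does not supply its own proof of Theorem~\ref{thm:har_cont}: it is cited directly from Harer \cite[Theorem 1.5]{Harer1985}, with a pointer to Hatcher \cite{Hatcher1991} ``for a concise proof.''  Your sketch is essentially that concise Hatcher proof --- a surgery flow driven by geometric intersection number with a fixed target, together with the observation that the star of the target is a cone --- so you are reconstructing exactly the argument the paper recommends, not inventing a new one.

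One structural remark, though.  Fixing a full maximal arc system $\Delta$ as the attractor is an unnecessary complication, and it is actually the source of most of the ambiguity you then have to fight (which arc of $\Delta$ to surger along, in which order, whether the resulting elementary homotopies commute).  Hatcher's actual device is to fix a \emph{single} arc $\beta$ and always surger at the intersection point of $\alpha\cap\beta$ closest to the marked point along $\beta$; with a single target arc the ``canonical choice'' problem disappears, the surgered arc is automatically disjoint from the original system so the old and new simplices span a simplex, and the flow is continuous on each star by construction.  The deformation retraction then lands in $\overline{\mathrm{St}}(\beta)$, which is a cone on $\beta$.  Your version can be made to work, but you would be re-deriving this simplification rather than starting from it.  The issues you flag (creation of trivial or parallel arcs after surgery, gluing the elementary homotopies into a global one) are the genuine technical points, and you have identified them correctly; in Hatcher's write-up they are handled by the single-arc normalization just described.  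Your mentioned alternative via the Jenkins--Strebel identification $\AC\smallsetminus\AC_\infty\cong\Teich(\surface_g^1)$ is closer in spirit to Harer's original 1985 argument, but it gives contractibility of the \emph{open} part and one still needs an argument to go from there to contractibility of all of $\AC$, so it is not the shortcut it might appear to be.
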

\begin{theorem}[Harer] 
\label{thm:har_spheres}
The arc complex at infinity $\AC_\infty(\surface_g^1)$ is homotopy equivalent to a wedge of spheres of dimension $2g-2$.
\end{theorem}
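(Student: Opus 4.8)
The plan is to leverage the contractibility of the full arc complex (Theorem~\ref{thm:har_cont}). Since $\AC_\infty(\surface_g^1)$ is a subcomplex of the contractible complex $\AC(\surface_g^1)$, the long exact sequence of the pair gives $\widetilde{\HH}_i(\AC_\infty;\Z)\cong \HH_{i+1}(\AC,\AC_\infty;\Z)$ for all $i$, while the relative chain complex $C_\ast(\AC,\AC_\infty)$ is free abelian on the filling arc systems. Every filling system has at least $2g$ arcs and so is a simplex of dimension at least $2g-1$; hence this relative complex vanishes in degrees $\le 2g-2$, giving $\widetilde{\HH}_i(\AC_\infty;\Z)=0$ for $i\le 2g-3$. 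Simple connectivity for $g\ge 2$ comes along for free: $\AC_\infty$ contains the entire $(2g-2)$-skeleton of $\AC$, hence the whole $2$-skeleton, so $\pi_1(\AC_\infty)\cong\pi_1(\AC_\infty^{(2)})=\pi_1(\AC^{(2)})\cong\pi_1(\AC)=1$, and the Hurewicz theorem then shows $\AC_\infty$ is $(2g-3)$-connected.

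What remains is to show that $\widetilde{\HH}_\ast(\AC_\infty;\Z)$ is concentrated in degree $2g-2$ and is \emph{free} there. My plan is to build a discrete Morse matching on the cells of $\AC$ that leaves every cell of $\AC_\infty$ unmatched and, among the filling arc systems, leaves critical exactly the minimal ($2g$-arc) filling systems --- all of dimension $2g-1$ --- matching each filling system with more than $2g$ arcs either to a filling subsystem with one fewer arc or to a filling oversystem with one more arc, the pairing arc chosen by a rule depending only on the combinatorics of the arc system (for example an innermost arc along a distinguished complementary disk, the disks being ordered by some fixed scheme). If this matching is acyclic, then $\AC$ collapses onto the subcomplex $\AC_{(0)}:=\AC_\infty\cup\{\text{minimal filling simplices}\}$, which is obtained from $\AC_\infty$ by attaching one $(2g-1)$-cell per minimal filling system, each attaching map landing in $\AC_\infty$ because every proper face of a minimal filling system is non-filling. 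Contractibility of $\AC$ then forces $\AC_{(0)}$ to be contractible, so the cofiber sequence $\AC_\infty\to\AC_{(0)}\to\AC_{(0)}/\AC_\infty$ yields $\Sigma\AC_\infty\simeq\AC_{(0)}/\AC_\infty=\bigvee S^{2g-1}$, one $(2g-1)$-sphere per minimal filling system; since $\AC_\infty$ is $(2g-3)$-connected, for $g\ge 2$ desuspending gives $\AC_\infty\simeq\bigvee S^{2g-2}$, which delivers the concentration of homology in degree $2g-2$, its freeness, and the wedge-of-spheres conclusion simultaneously. The main obstacle is exactly constructing this matching and verifying its acyclicity: the pairing arc must be selected canonically, the up and down assignments must be mutually inverse, and there must be no closed $V$-paths; keeping track of which faces of a given filling simplex are themselves filling is where an induction on the genus (or on the number of arcs) is needed, and this combinatorial surgery is the technical heart of the argument.

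As a cross-check, Theorem~\ref{thm:har_spheres} also follows formally from Harer's homotopy equivalence $\Psi\colon\AC_\infty(\surface_g^1)\to\CC(\surface_g^1)$ together with the homotopy type of the curve complex as a wedge of $(2g-2)$-spheres, both recalled in \S\ref{sec:notation}; but this route reverses the natural logical order, since that homotopy type of $\CC(\surface_g^1)$ is itself established by studying $\AC_\infty$. Finally the case $g=1$ is handled by inspection: on $\surface_1^1$ any two disjoint essential arcs cut the surface into a single disk, so every arc system with at least two arcs already fills, $\AC_\infty(\surface_1^1)$ is exactly the countably infinite discrete vertex set of $\AC(\surface_1^1)$, and this is plainly homotopy equivalent to a wedge of $0$-spheres.
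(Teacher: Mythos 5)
The paper does not prove this statement; it cites \cite[Theorem~3.3]{Harer1986} directly (and remarks that Hatcher--Vogtmann have simplified the key steps in unpublished work \cite{HVup}). So there is no ``paper's own proof'' to compare against, and your proposal has to stand on its own. The preliminary reductions are fine: the long exact sequence of $(\AC,\AC_\infty)$ together with contractibility of $\AC$ and the fact that $C_*(\AC,\AC_\infty)$ is concentrated in degrees $\ge 2g-1$ give $\widetilde{\HH}_i(\AC_\infty)=0$ for $i<2g-2$; the $2$-skeleton argument gives simple connectivity for $g\ge 2$; and your treatment of $g=1$ is correct.

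The discrete Morse step, however, is not merely incomplete --- it cannot work as you describe it. You ask for an acyclic matching whose critical cells, among the filling simplices, are exactly the minimal ($2g$-arc) filling systems. First, your own matching rule permits a $1$-filling system to be matched downward to a $0$-filling subsystem, which would make that minimal system non-critical; so the description is internally inconsistent. Second, and more decisively, no acyclic matching with exactly that critical set can exist. Such a matching would collapse $\AC$ onto the subcomplex $\AC_{(0)}:=\AC_\infty\cup\{\text{minimal filling simplices}\}$, forcing $\AC_{(0)}$ to be contractible. The long exact sequence of $(\AC_{(0)},\AC_\infty)$ would then identify $\widetilde{\HH}_{2g-2}(\AC_\infty)$ with $\HH_{2g-1}(\AC_{(0)},\AC_\infty)$, which, since $\AC_{(0)}/\AC_\infty$ has cells only in dimension $2g-1$, is the \emph{entire} free abelian group on minimal filling systems. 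On the other hand, $\widetilde{\HH}_{2g-2}(\AC_\infty)\cong\HH_{2g-1}(\AC,\AC_\infty)=C_{2g-1}/\partial C_{2g}$, and the boundary $\partial\colon C_{2g}\to C_{2g-1}$ is nonzero: Proposition~\ref{prop:connected} exhibits a $1$-filling system $\alpha_c$ with $\partial\alpha_c=\pm\alpha$ for a nonzero $0$-filling system $\alpha$. Hence $\widetilde{\HH}_{2g-2}(\AC_\infty)$ is a proper quotient of the free group on minimal filling systems, $\AC_{(0)}$ is not contractible, and the matching you want does not exist. (For $g=1$ this is concrete: $\AC_{(0)}$ is the Farey graph, which has abundant cycles.) A genuine Morse-theoretic proof would have to match up some minimal filling systems or accept critical cells in dimension $\ge 2g$, and either change amounts to a different argument rather than filling in the one you sketch.
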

Theorem~\ref{thm:har_cont} was established in \cite[Theorem 1.5]{Harer1985} (see \cite{Hatcher1991} for a concise proof) and 
Theorem~\ref{thm:har_spheres} was shown in \cite[Theorem 3.3]{Harer1986}.  An alternate proof is provided in \cite[Theorem 6.6]{Ivanov1987}

Consider this portion of the long exact sequence of reduced homology groups for the pair of spaces $( \AC, \AC_\infty)$.
\begin{equation}
 \HH_{k+1}(\AC;\Z) \to \HH_{k+1}(\AC / \AC_\infty;\Z) \to \widetilde{\HH}_{k}(\AC_\infty;\Z) \to \widetilde{\HH}_{k}(\AC;\Z).
\end{equation}
By Theorem~\ref{thm:har_cont} the first and last groups in this sequence are trivial for $k \geq 0$; consequently,
\begin{equation}
\label{eq:a_infinity}
 \HH_{k+1}(\AC / \AC_\infty;\Z) \cong \widetilde{\HH}_{k}(\AC_\infty;\Z) \ \ \ \  \mbox{for $k \geq 0$}.
\end{equation}
Now combining equations (\ref{eq:inf_def}) and (\ref{eq:a_infinity}) we arrive at a very useful description of the Steinberg module,
\begin{equation}
\label{eq:st_working_def}
 \St(\surface_g^1) \cong \HH_{2g-1}(\AC / \AC_\infty;\Z).
\end{equation}

The chain complex $(\mathrm{C}_\ast(\AC / \AC_\infty;\Z), \partial)$ for cellular homology of the space $\AC / \AC_\infty$ will provide us with a resolution for $\St(\surface_g^1)$ as a $\Mod(\surface_g^1)$-module.  We define the chain complex  $(\fsys_\ast,\partial)$ to be the shifted complex with
\begin{equation}
\label{eq:chain_def}
 \fsys_k := \mathrm{C}_{2g - 1 + k}( \AC / \AC_\infty;\Z)
\end{equation}
and the same boundary maps as $(\mathrm{C}_\ast(\AC / \AC_\infty;\Z), \partial)$.
\begin{proposition}
\label{prop:stein_res}
Let $(\fsys_\ast,\partial)$ be the chain complex defined in Equation~(\ref{eq:chain_def}).  The exact sequence
\begin{equation*}
 0 \to \fsys_{4g-3} \stackrel{\partial}{\to} \cdots \stackrel{\partial}{\to} \fsys_2 \stackrel{\partial}{\to} \fsys_1 \stackrel{\partial}{\to} \fsys_0 \to \St(\surface_g^1) \to 0
\end{equation*}
is a finite $\Mod(\surface_g^1)$-module resolution\footnote{Compare with the resolution of the Steinberg module for $\SL(n,\Z)$ given in \cite[\S 1]{Ash1994}.} for the Steinberg module $\St(\surface_g^1)$.
\end{proposition}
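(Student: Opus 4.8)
The plan is to assemble the resolution directly from the cellular chain complex of the pair $(\AC, \AC_\infty)$, using the two results of Harer already quoted and the identification in equation (\ref{eq:st_working_def}). First I would observe that, by definition of relative cellular homology, $\mathrm{C}_\ast(\AC/\AC_\infty;\Z)$ is the chain complex whose $k$-th term is free abelian on the $k$-cells of $\AC$ not contained in $\AC_\infty$, i.e.\ on the arc systems that \emph{fill} $\surface_g^1$. Since a filling system has at least $2g$ arcs, these are exactly the simplices of $\AC$ of dimension $\geq 2g-1$, and the minimal $k$-filling systems ($2g+k$ arcs) cut the surface into $k+1$ disks. The maximal number of arcs in a system that still allows a complementary disk to be cut further is bounded: once every complementary region is a triangle (a disk with three sides), no arc can be added without creating a parallel pair, so the top-dimensional filling systems are the ideal triangulations, which have $4g-2$ arcs, hence sit in dimension $4g-3$. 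Thus $\mathrm{C}_n(\AC/\AC_\infty;\Z)=0$ for $n < 2g-1$ and for $n > 4g-3$, which after the shift (\ref{eq:chain_def}) gives exactly the stated length: $\fsys_k \neq 0$ only for $0 \leq k \leq 4g-3$.

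Next I would check exactness. Because $\AC$ is contractible (Theorem~\ref{thm:har_cont}), its reduced homology vanishes in all degrees, so the long exact sequence of the pair yields $\HH_n(\AC/\AC_\infty;\Z) \cong \widetilde\HH_{n-1}(\AC_\infty;\Z)$ for all $n$; this is already recorded in (\ref{eq:a_infinity}). By Theorem~\ref{thm:har_spheres}, $\AC_\infty$ is homotopy equivalent to a wedge of $(2g-2)$-spheres, so $\widetilde\HH_{n-1}(\AC_\infty;\Z)=0$unless $n-1 = 2g-2$, i.e.\ unless $n = 2g-1$. Therefore $\HH_n(\AC/\AC_\infty;\Z)=0$ for every $n \neq 2g-1$, which says precisely that the cellular chain complex $\mathrm{C}_\ast(\AC/\AC_\infty;\Z)$ is acyclic except in degree $2g-1$, where its homology is $\HH_{2g-1}(\AC/\AC_\infty;\Z) \cong \St(\surface_g^1)$ by (\ref{eq:st_working_def}). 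Translating through the degree shift $\fsys_k = \mathrm{C}_{2g-1+k}(\AC/\AC_\infty;\Z)$, the complex $\fsys_\ast$ is acyclic in positive degrees and $\fsys_0/\partial\fsys_1 = \HH_0(\fsys_\ast) = \St(\surface_g^1)$, which is exactly the asserted exact sequence $0 \to \fsys_{4g-3} \to \cdots \to \fsys_0 \to \St(\surface_g^1) \to 0$.

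It remains to see that this is a resolution \emph{by $\Mod(\surface_g^1)$-modules}, i.e.\ that each $\fsys_k$ is a $\Mod(\surface_g^1)$-module and each $\partial$ is $\Mod(\surface_g^1)$-equivariant. This is immediate: $\Mod(\surface_g^1)$ acts simplicially on $\AC(\surface_g^1)$, preserves the subcomplex $\AC_\infty$ (since whether an arc system fills is a topological, hence mapping-class-invariant, property), and therefore acts on the quotient $\AC/\AC_\infty$ cellularly; the induced action on cellular chains commutes with the boundary maps, and under the isomorphism (\ref{eq:st_working_def}) the action on $\HH_{2g-1}$ agrees with the action on $\widetilde\HH_{2g-2}(\CC(\surface_g^1);\Z)$ defining $\St(\surface_g^1)$, since the maps producing (\ref{eq:inf_def}) and (\ref{eq:st_working_def}) are all $\Mod(\surface_g^1)$-equivariant. (I would remark that the $\fsys_k$ are free abelian but not free $\Z\Mod(\surface_g^1)$-modules; the point of the word ``finite'' here is that the resolution has finite length, not that it is of finite type.)

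The substantive input is entirely borrowed --- Harer's contractibility of $\AC$ and his sphericity of $\AC_\infty$ do all the real work --- so the only thing I genuinely need to nail down carefully is the vanishing range, and in particular the upper bound $\fsys_k = 0$ for $k > 4g-3$. The expected obstacle is the bookkeeping there: one must argue that a filling arc system with no further arc addable (modulo parallels) has all complementary regions triangles, compute that such an ideal triangulation of $\surface_g^1$ has exactly $4g-2$ arcs (e.g.\ by an Euler-characteristic count with one vertex), and conclude that the top nonzero cellular degree of $\AC$ is $4g-3$, so that $\fsys_{4g-3}$ is the last term. Everything else is a direct reading of the long exact sequence of the pair.
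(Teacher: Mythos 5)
Your overall route matches the paper's proof: use the long exact sequence of the pair $(\AC,\AC_\infty)$, Harer's contractibility of $\AC$ and sphericity of $\AC_\infty$ to establish acyclicity of $\fsys_\ast$ away from degree $0$, and then a dimension count to bound the length. However, the dimension count contains a genuine arithmetic error that breaks the upper bound. You claim the one-vertex ideal triangulations of $\surface_g^1$ have $4g-2$ arcs, but this is the number of complementary \emph{triangles}, not arcs. The Euler-characteristic count with $V=1$, $3F=2E$, and $V-E+F=2-2g$ gives $E = 6g-3$ edges, so the top simplex of $\AC$ has dimension $6g-4$, not $4g-3$ as you state. Your subsequent sentence then compounds this: if the top cell really sat in dimension $4g-3$, the shift $\fsys_k=\mathrm{C}_{2g-1+k}$ would force $\fsys_k=0$ for $k>2g-2$, not $k>4g-3$; you wrote down the paper's stated length without it actually following from your (incorrect) premise. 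With the correct count of $6g-3$ arcs, the top cell is in degree $6g-4$ and the shift gives vanishing precisely for $k>6g-4-(2g-1)=4g-3$, which is what the paper does. Everything else in your argument (the long exact sequence, the $\Mod(\surface_g^1)$-equivariance, the remark that the $\fsys_k$ are free abelian but not free $\Z\Mod(\surface_g^1)$-modules) is correct and matches the paper.
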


\begin{proof}
By Theorem~\ref{thm:har_spheres} the arc complex at infinity $\AC_\infty$ has the homotopy type of a wedge of $(2g-2)$-dimensional spheres.  Therefore
\begin{equation*}
\begin{split}
\HH_{k}(\fsys_\ast) & = \HH_{2g-1+k}(\AC / \AC_\infty;\Z) \\
 & \cong \widetilde{\HH}_{2g-2+k}( \AC_\infty;\Z) \\
 & = \left \{\begin{array}{ll} 0, & k > 0 \\ \St(\surface_g^1), & k=0.\end{array} \right.
\end{split}
\end{equation*}
In other words, the chain complex $(\fsys_*,\partial)$ gives a resolution of $\St(\surface_g^1)$.

The maximum number of arcs in an arc system occurs when the arcs comprise the one-skeleton of a one-vertex triangulation of $\surface_g^1$.  Using euler characteristic one may then conclude that an arc system has at most $6g-3$ arcs so $\AC$ and hence $\AC / \AC_\infty$ is $6g-4$ dimensional.   It follows that $\fsys_k = \mathrm{C}_{2g - 1 + k}( \AC / \AC_\infty;\Z) = 0$ when $k > 6g-4 - 2g +1 = 4g-3$.
 \end{proof}

A $k$-filling system corresponds to a unique $(2g + k-1)$-cell in $\AC/\AC_\infty$ with the cell decomposition inherited from $\AC$.  When the orientation of this $(2g + k-1)$-cell is important we will specify it via an orientation on the unique $(2g + k-1)$-simplex in $\AC$ mapping to it.  An \emph{oriented} $k$-filling system will be a $k$-filling system together with an order on the set of arcs in the system up to alternating permutations.  By definition $\fsys_k = \mathrm{C}_{2g - 1 + k}( \AC / \AC_\infty;\Z)$ is the set of finite $\Z$-linear combinations of oriented $k$-filling systems\footnote{Translational Note: For the mapping class group, $0$-filling systems play a similar role to that of \emph{modular symbols} for $\SL(n,\Z)$.}.

There are a finite number of topologically distinct ways to glue the sides of polygons to get a one-vertex cell decomposition of a surface of genus $g$, and by the usual change of coordinates principle $\Mod(\surface_g^1)$ is transitive on the set of cell decompositions of the same topological type. Consequently, as a $\Mod(\surface_g^1)$-module, $\fsys_k$ is generated by a finite number of oriented $k$-filling systems.

Note that in general $\fsys_k$ is not quite a free $\Mod(\surface_g^1)$-module since some filling arc systems have nontrivial (but always finite cyclic) stabilizers in $\Mod(\surface_g^1)$.

There are two modifications either of which makes the resolution in Proposition~\ref{prop:stein_res} {\em projective}. Firstly, if coefficients are taken in $\Q\Mod(\surface_g^1)$ instead of $\Z\Mod(\surface_g^1)$ then Proposition~\ref{prop:stein_res} does give a projective resolution of the rational homology of the curve complex (see \cite[pg. 30 Exercise 4]{Brown}).  Alternatively, if one restricts to some torsion-free subgroup $\Gamma < \Mod(\surface_g^1)$ then Proposition~\ref{prop:stein_res} gives a free $\Gamma$-module resolution of $\St(\surface_g^1)$.

It is worth pointing out that in the cases where the $\Gamma$-module resolution of $\St(\surface_g^1)$ in Proposition \ref{prop:stein_res} is projective, we get the following formula for computing group cohomology of $\Gamma$.
\begin{proposition}
\label{prop:cohom_calc}
If $\Gamma < \Mod(\surface_g^1)$ is torsion free and finite-index subgroup then for any $\Gamma$-module $A$
$$ \HH^n(\Gamma;A) \cong \HH_{4g-3-n}(\fsys_\ast \otimes_\Gamma A)$$
\end{proposition}
\begin{proof}
 The proof is immediate from duality and properties of $\Tor$.
By duality
$$\HH^n(\Gamma;A) \cong \HH_{4g-3-n}(\Gamma;\St \otimes_\Z A)$$
where $\St \otimes_\Z A$ has $\Gamma$-module structure given by the diagonal action: $\gamma (d \otimes a) = \gamma d \otimes \gamma a$.
$$\HH_{4g-3-n}(\Gamma;\St \otimes_\Z A) \cong \Tor_{4g-3-n}^\Gamma(\St,A) \cong \HH_{4g-3-n}(\fsys_\ast \otimes_\Gamma A).$$
See \cite[III.2]{Brown}.
\end{proof}

\begin{corollary}
If $\Gamma < \Mod(\surface_g^1)$ is a torsion free and finite-index subgroup then the resolution in Proposition \ref{prop:stein_res} is the shortest possible projective $\Gamma$-module resolution of $\St(\surface_g^1)$.
\end{corollary}

For the purposes of this paper we will make do with the resolution as is.  Note that the tail of this resolution almost provides us with a $\Mod(\surface_g^1)$-module presentation for $\St(\surface_g^1)$.  It is not exactly a presentation because $\fsys_0$ is not a free $\Mod(\surface_g^1)$-module.  However, it can easily be augmented to give a $\Mod(\surface_g^1)$-module presentation by artificially adding stabilizer relations which account for the accidental symmetries of certain $0$-filling systems.

\begin{proposition}
\label{prop:st_pres}
Let $g \geq 1$.  Choose a set of oriented representatives $G = \{ \phi_0, \cdots, \phi_n \}$ of each $\Mod(\surface_g^1)$-orbit of the set of $0$-filling systems.  For each $\phi_i \in G$ let $h_i \in \Mod(\surface_g^1)$ be a generator of the stabilizer of the arc system for $\phi_i$, and let $e_i = \pm 1$ be the sign of the permutation that the mapping class $h_i$ induces on this set of arcs.
Also choose a set of oriented representatives $\{ \rho_0, \cdots, \rho_m \}$ of each $\Mod(\surface_g^1)$-orbit of the set of $1$-filling systems. $\St(\surface_g^1)$ has a presentation
\begin{equation*}
 \St(\surface_g^1) \cong \big\langle \phi_0, \cdots, \phi_n  \ \big| \ \partial \rho_0, \cdots, \partial \rho_m,  (1 - e_0h_0)\phi_0 , \cdots, (1 - e_nh_n)\phi_n \big\rangle.
\end{equation*}
Forgetting the marked point gives a surjective ring homomorphism
\begin{equation*}
\Z \Mod(\surface_g^1) \surjection \Z \Mod(\surface_g).
\end{equation*}
If coefficients in the above presentation are sent to their images under this ring homomorphism then we get a $\Mod(\surface_g)$-module presentation
\begin{equation*}
 \St(\surface_g) \cong \big\langle \phi_0, \cdots, \phi_n  \ \big| \ \partial \rho_0, \cdots, \partial \rho_m,  (1 - e_0h_0)\phi_0 , \cdots, (1 - e_nh_n)\phi_n \big\rangle.
\end{equation*}
\end{proposition}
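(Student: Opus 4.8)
The plan is to convert the tail $\fsys_1 \stackrel{\partial}{\to} \fsys_0 \to \St(\surface_g^1) \to 0$ of the resolution from Proposition~\ref{prop:stein_res} into an honest $\Mod(\surface_g^1)$-module presentation. Since $\St(\surface_g^1) \cong \fsys_0 / \partial \fsys_1$, the module is generated by the images of any generating set of $\fsys_0$. As observed immediately after Proposition~\ref{prop:stein_res}, $\fsys_0$ is generated as a $\Mod(\surface_g^1)$-module by finitely many oriented $0$-filling systems $\phi_0, \dots, \phi_n$, one per orbit, so these classes generate $\St(\surface_g^1)$. The content of the proposition is in identifying the relations. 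First I would write the free $\Mod(\surface_g^1)$-module $F$ on symbols $\phi_0, \dots, \phi_n$ and the surjection $\pi : F \surjection \fsys_0$. The kernel of $\pi$ is generated, as a $\Mod(\surface_g^1)$-module, by the elements $(1 - e_i h_i)\phi_i$: indeed, the stabilizer in $\Mod(\surface_g^1)$ of the $i$-th arc system is finite cyclic (as noted in the text) generated by $h_i$, and $h_i$ acts on the oriented filling system $\phi_i$ by the sign $e_i$ of the induced permutation of arcs; so $g\phi_i = g'\phi_i$ in $\fsys_0$ precisely when $g^{-1}g'$ lies in the cyclic stabilizer, which is exactly the condition that $g\phi_i - g'\phi_i$ lies in the $\Mod(\surface_g^1)$-submodule generated by $(1-e_ih_i)\phi_i$. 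This is the standard computation of a permutation-type module with torsion stabilizers: $\fsys_0 = \bigoplus_i \Z\Mod(\surface_g^1)\otimes_{\Z\langle h_i\rangle}\Z_{e_i}$, presented by $\langle \phi_i \mid (1-e_ih_i)\phi_i\rangle$.

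Next I would pull back the submodule $\partial \fsys_1 \subseteq \fsys_0$ along $\pi$. Since $\fsys_1$ is generated as a $\Mod(\surface_g^1)$-module by the finitely many oriented $1$-filling systems $\rho_0, \dots, \rho_m$, the submodule $\partial\fsys_1$ is generated by $\partial\rho_0, \dots, \partial\rho_m$ (each $\partial\rho_j$ being a specific element of $\fsys_0$, hence lifted to an element of $F$ by choosing lifts of its constituent $0$-filling systems — this lift is well-defined modulo $\ker\pi$, which is already among our relations). Therefore $\St(\surface_g^1) = \fsys_0/\partial\fsys_1 = F/\langle\ker\pi,\ \partial\rho_0,\dots,\partial\rho_m\rangle$, which is exactly the claimed presentation. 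I should be slightly careful that the symbol $\partial\rho_j$ appearing in the relator list is interpreted as a chosen lift in $F$; since any two lifts differ by an element of $\langle(1-e_ih_i)\phi_i\rangle$, which is already a relator, the presentation is independent of this choice, and I would remark on this.

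For the closed-surface statement, I would invoke Lemma~\ref{lem:same_module} and Corollary~\ref{cor:factors}: the $\Mod(\surface_g^1)$-action on $\St(\surface_g^1)$ factors through $\Mod(\surface_g)$, so $\St(\surface_g) \cong \St(\surface_g^1)$ is obtained by base change along the surjection $\Z\Mod(\surface_g^1)\surjection\Z\Mod(\surface_g)$. Base change is right exact, so it takes a presentation to a presentation with the same generator symbols and the images of the relators, giving the second displayed isomorphism directly. I expect the main obstacle to be purely expository rather than mathematical: making precise the passage between "oriented $k$-filling system" as a cell of $\AC/\AC_\infty$ and as a generator of the free-ish module $\fsys_k$, and verifying carefully that the stabilizer of a filling arc system acts on the corresponding oriented cell exactly through the sign character $e_i$ — i.e., that there are no further relations hiding in how a mapping class permutes the arcs versus how it orients the simplex. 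This is where the hypothesis that filling-system stabilizers are finite cyclic (so that one generator $h_i$ and one sign $e_i$ suffice) does the real work, and I would spell out that point rather than treat it as routine.
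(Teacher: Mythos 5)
Your proposal is correct and follows essentially the same route as the paper: pass from $\St(\surface_g^1)\cong\fsys_0/\partial\fsys_1$ to a presentation by first presenting $\fsys_0$ via the finite cyclic stabilizers (your $\fsys_0\cong\bigoplus_i\Z\Mod(\surface_g^1)\otimes_{\Z\langle h_i\rangle}\Z_{e_i}$ is exactly the paper's equation (\ref{eq:fill0_pres}), just spelled out), then adjoin the relators $\partial\rho_j$. For the closed-surface part, your base-change along $\Z\Mod(\surface_g^1)\surjection\Z\Mod(\surface_g)$ is literally the paper's $P$-co-invariants functor in different language, and both rely on Lemma~\ref{lem:same_module}/Corollary~\ref{cor:factors} to identify $\St(\surface_g^1)_P$ with $\St(\surface_g)$; your extra remark about lifting $\partial\rho_j$ to the free module modulo $\ker\pi$ is a careful elaboration of a step the paper leaves implicit, but it is not a different argument.
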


\begin{proof}
By Proposition~\ref{prop:stein_res} we have the $\Mod(\surface_g^1)$-module isomorphism
\begin{equation*}
 \St(\surface_g^1) \cong \frac{\fsys_0}{ \partial \fsys_1}.
\end{equation*}
By definition $\fsys_0$ is spanned by oriented $0$-filling systems as a $\Z$-module.  Every oriented $0$-filling system is of the form $\pm h \phi_i$ for some $\phi_i \in G$ and $h \in \Mod(\surface_g^1)$.  Hence $G$ spans $\fsys_0$ as a $\Mod(\surface_g^1)$-module.  The only $\Z\Mod(\surface_g^1)$-linear dependencies in the set $G$ arise from stabilizers of $0$-filling systems.  The arc system for a $0$-filling system cuts the surface into a single $4g$-gon, so its stabilizer must be a subgroup of the rotational symmetries of the regular $4g$-gon. Hence we have the presentation
\begin{equation}
\label{eq:fill0_pres}
 \fsys_0 \cong \big\langle \phi_0, \cdots, \phi_n  \ \big| \ (1 - e_0h_0)\phi_0 , \cdots, (1 - e_nh_n)\phi_n \big\rangle.
\end{equation}
and the presentation for $\St(\surface_g^1)$ then follows.

For the closed surface $\surface_g$ we do not have a resolution for $\St(\surface_g)$ as a $\Mod(\surface_g)$-module; however, we do get a $\Mod(\surface_g)$-module presentation for $\St(\surface_g)$ by taking co-invariants (defined below) in the presentation for $\St(\surface_g^1)$.  Let $P < \Mod(\surface_g^1)$ be the \emph{point pushing subgroup} of $\Mod(\surface_g^1)$; that is, the kernel of $\mu$ in the Birman Exact Sequence \cite{Birman1969}
\begin{equation}
\label{eq:bes}
1 \to \pi_1(\surface_g) \to \Mod(\surface_g^1) \stackrel{\mu}{\to} \Mod(\surface_g) \to 1 \ \ \ \ \ \mbox{(for $g \geq 2$)}
\end{equation}
or for $g=1$  the trivial kernel of $\mu$ in the exact sequence
\begin{equation}
\label{eq:g1bes}
 1 \to \Mod(\surface_1^1) \stackrel{\mu}{\cong} \Mod(\surface_1) \to 1
\end{equation}
The $P$-co-invariants $M_P$ of a $\Mod(\surface_g^1)$-module $M$ are the quotient of $M$ by the $P$-submodule generated by the set $\{pm-m|p \in P, m \in M\}$. The $P$-co-invariants $M_P$ have a $\Mod(\surface_g)$-module structure \cite[\S II.2 Problem~3]{Brown}.

 The exact sequence
\begin{equation*} \fsys_1 \to \fsys_0 \to \St(\surface_g^1) \to 0\end{equation*}
remains exact \cite[\S II.2]{Brown} after taking $P$-co-invariants to get
\begin{equation*} (\fsys_1)_P \to (\fsys_0)_P \to \St(\surface_g^1)_P \to 0. \end{equation*}
By Corollary~\ref{cor:factors} and Lemma~\ref{lem:same_module} we have $\Mod(\surface_g)$-module isomorphisms
\begin{equation*}\St(\surface_g^1)_P \cong \St(\surface_g^1) \cong \St(\surface_g).\end{equation*}
Hence, as a $\Mod(\surface_g)$-module $\St(\surface_g)$ satisfies
\begin{equation*}
 \St(\surface_g) \cong \frac{(\fsys_0)_P}{ \partial (\fsys_1)_P}.
\end{equation*}
All that is left is to observe that taking the $P$-co-invariants of $\fsys_0$ achieves the same effect as sending the coefficients in the presentation in (\ref{eq:fill0_pres}) to their images in $\Z \Mod(\surface_g)$.
 \end{proof}


\section{The Steinberg module is cyclic}
\label{sec:cyclic}

Part one of the Solomon-Tits Theorem states that the Tits building $\TB(n,\Q)$ for $\SL(n,\Q)$ has the homotopy type of a wedge of spheres of dimension $n-2$.  The second part says that  the $\SL(n,\Q)$-module $\St(n) = \widetilde{\HH}_{n-2}(\TB(n,\Q);\Z)$ is a cyclic $\SL(n,\Q)$-module generated by the $(n-2)$-sphere coming from a single apartment.  In analogy with the first part of the Solomon-Tits Theorem, Harer has shown that the curve complex has the homotopy type of a wedge of $(2g-2)$-spheres.
In this section we will prove the analog of the second part of the Solomon-Tits Theorem in Theorem~\ref{thm:single_gen} below which states that as a $\Mod(\surface)$-module the Steinberg module $\St(\surface)$ is generated by a single element.

In fact, our Theorem~\ref{thm:single_gen} more closely resembles a result of Ash-Rudolph \cite[Theorem~4.1]{AR1979} which implies that the reduced homology of the Tits building $\TB(n,\Q)$ for $\SL(n,\Q)$ is a cyclic $\SL(n,\Z)$-module. The action of $\SL(n,\Z)$ is no longer transitive on the set of apartments of $\TB(n,\Q)$, so one must rely on a reduction process to rewrite the homology class of the sphere for an arbitrary apartment as a sum of homology classes of spheres of ``integral unimodular'' apartments.

The arc complex at infinity does not come with any apartment structure, but  $0$-filling systems come in a finite number of types based on their $\Mod(\surface_g^1)$-orbits.  Proposition~\ref{prop:salient} below gives a reduction algorithm to write the class of any oriented $0$-filling system in the Steinberg module for the mapping class group as a linear combination of classes of oriented $0$-filling systems of a single type.

In order to state and prove Theorem~\ref{thm:single_gen} we will first discuss a notational convenience which will allow us to keep track of complicated filling arc systems.


\subsection{Chord diagrams}
\label{sec:chord_diagrams}

Already in genus two it is difficult to keep track of filling arc systems and their symmetries when drawn on surfaces.  Chord diagrams provide a convenient notation for this. (See \cite[Figure~9]{Mosher1994} and the related discussion for an introduction.)

An (unlabelled) \emph{chord diagram} (see Figure~\ref{fig:fat_graph}, left) will be a regular $2n$-gon (which will always be depicted as a circle) with vertices paired off so that no two adjacent vertices are paired.  The pairing will be indicated by $n$ \emph{chords} in the chord diagram.  We will call an edge of the $2n$-gon an \emph{outer edge} of the chord diagram.  A \emph{labelled chord diagram} (see Figure~\ref{fig:chord_diagram}, right) will be a chord diagram in which each chord is labelled on one side by an element of $\pi_1 (\surface_g,\ast)$ and on the other side by its inverse.  In practice we will only label one side of each chord with the assumption that the other side is labelled with the inverse element in $\pi_1 (\surface_g,\ast)$. Two labelled or unlabelled chord diagrams are the same if they can be made to agree after rotation of the $2n$-gon.  A labelling of a chord diagram will be called \emph{proper} if it comes from a filling arc system in the surface $\surface_g^1$.  We implicitly require all labellings of chord diagrams to be proper.

In an arc system on the surface $\surface_g^1$, arcs leave and return to a neighborhood of the marked point $\ast$ in a certain cyclic order.  To each filling arc system in $\surface_g^1$ with $n$ arcs, associate the labelled chord diagram obtained by placing a $2n$-gon in a neighborhood of the marked point so that each arc enters and leaves the $2n$-gon at a vertex.  For each arc in the arc system connect the two vertices of the $2n$-gon on that arc with a chord.  The two sides of the chord correspond to the two sides of the arc.  Label each side of the chord by the element of $\pi_1(\surface_g,\ast)$ obtained by following the arc on the corresponding side in a counter-clockwise direction (see Figure~\ref{fig:chord_diagram}). 

\begin{figure}[!ht]
\begin{center}
\resizebox{0.75\textwidth}{!}{%
\begin{minipage}[c]{2.8in}
\begin{center}
\includegraphics[scale=0.5]{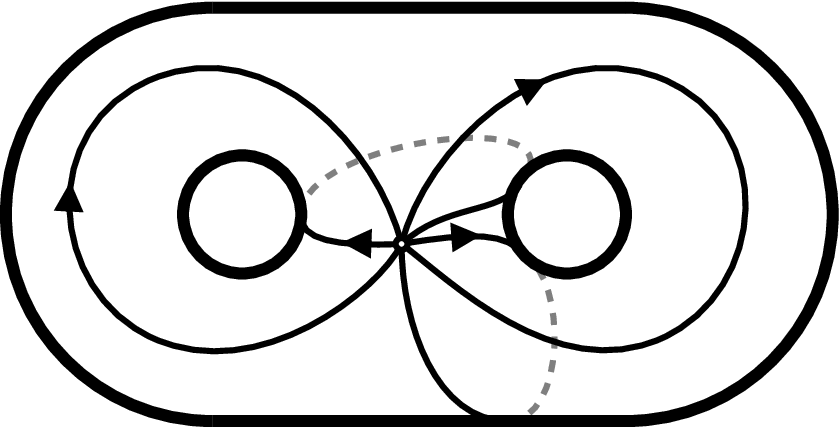}
\put(-108,53){$\ast$}
\put(-195,55){$x$}
\put(-127,37){$y$}
\put(-70,89){$z$}
\put(-111,15){$w$}
\end{center}
\label{fig:g2_fill}
\end{minipage}
\hspace{0.2in}
\begin{minipage}[c]{1.4in}
\begin{center}
\includegraphics[scale=1.0]{chords-example.epsi}
\put(-90,29){$x^{-1}$}
\put(-70,25){$x$}
\put(-65,60){$y^{-1}$}
\put(-58,48){$y$}
\put(-45,78){$z^{-1}$}
\put(-56,78){$z$}
\put(-54,20){$w$}
\put(-45,12){$w^{-1}$}
\end{center}
\end{minipage}
}
\end{center}
\caption{\label{fig:chord_diagram}A filling arc system (left) and the corresponding labelled chord diagram (right).}
\end{figure}

We will do most of our calculations using chord diagrams, so it will be convenient to be able to recover certain characteristics of the surface and embedded filling arc system corresponding to a given chord diagram.  Firstly we briefly explain how to reconstruct the surface.  From an $n$-chord diagram we may construct a surface with a certain embedded, one-vertex graph as follows.  A \emph{fat graph} is a regular neighborhood of a graph embedded in a surface together with the embedding of the graph, or equivalently, a graph together with, for each vertex of the graph, a cyclic order on the termini of the edges incident with that vertex. A chord diagram specifies a one-vertex fat graph with one edge for each chord, where the cyclic order on the edge termini corresponds to the cyclic order on endpoints of the chords in the diagram (see Figure~\ref{fig:fat_graph}).  
\begin{figure}[!ht]
\begin{center}
\resizebox{0.50\textwidth}{!}{%
\begin{minipage}[c]{1.4in}
\begin{center}
\includegraphics[scale=1.0]{fat-graph-1.epsi}
\end{center}
\end{minipage}
\hspace{.07\linewidth}
\begin{minipage}[c]{1.1in}
\begin{center}
\includegraphics[scale=0.7]{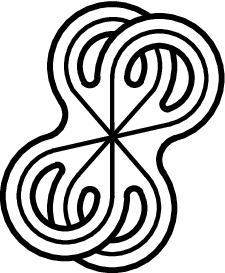}
\end{center}
\end{minipage}
}
\end{center}
\caption{\label{fig:fat_graph}A chord diagram (left) and the corresponding fat graph (right).}
\end{figure}

Now we may glue disks to each of the $b$ boundary components of the fat graph to get a closed surface in which the edges of the fat graph form a filling arc system. Note that one generally considers the fat graph dual to an arc system \cite{PM2007} \cite{ABP2007}, but here our arc system and fat graph agree.  It will be useful to know the genus $g$ of this closed surface, which is easily calculated using the number $n$ of edges in the fat graph and the number $b$ of boundary components of the fat graph, to be
\begin{equation}
\label{eq:euler}
 g = \frac{n+1-b}{2}.
\end{equation}
In particular, this shows that a chord diagram with $2g$ chords corresponds to a $0$-filling system in a surface of genus $g$ precisely when the corresponding fat graph has one boundary component.  

In light of Equation~(\ref{eq:euler}) above we would like to be able to quickly read off the number $b$ of boundary components in the fat graph corresponding to a given chord diagram.  In fact it is easy to see that $b$ is the number of \emph{cycles} in the chord diagram.  A {\em cycle} in a chord diagram (see Figure~\ref{fig:cycles}) is an alternating sequence of chords and outer edges of the chord diagram obtained by starting at a point just inside an outer edge of the diagram and walking along in a clockwise direction keeping the outer edge on one's left until a chord is encountered, turning right and then following the chord keeping it on the left until an outer edge is encountered, turning right and repeating until one returns to the starting point. 
Notice that for each cycle in a labelled chord diagram the products of the labels in the  cycle must be $1 \in \pi_1(\surface_g)$, since the corresponding loop in the surface bounds a disk.

\begin{figure}[h]
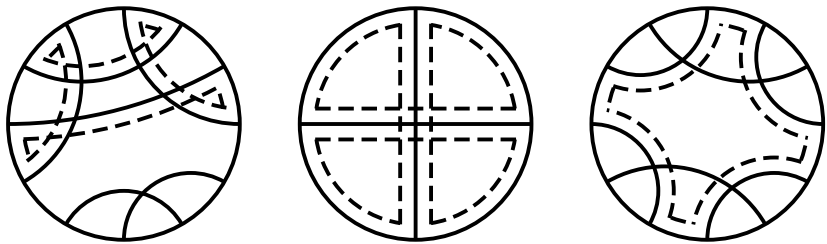

\begin{center}
\resizebox{0.75\textwidth}{!}{%
\begin{minipage}[c]{1.3in}
\begin{center}
\includegraphics[scale=1.0]{cycle-1.epsi}
\end{center}
\end{minipage}
\hspace{.02\linewidth}
\begin{minipage}[c]{1.3in}
\begin{center}
\includegraphics[scale=1.0]{cycle-2.epsi}
\end{center}
\end{minipage}
\hspace{.02\linewidth}
\begin{minipage}[c]{1.3in}
\begin{center}
\includegraphics[scale=1.0]{cycle-3.epsi}
\end{center}
\end{minipage}
}
\end{center}
\caption{\label{fig:cycles}Cycles in chord diagrams.}
\end{figure}

Two chords are \emph{parallel} if along with two outer edges they bound a rectangular cycle.  For example, the left chord diagram in Figure~\ref{fig:cycles} has two parallel chords. Since we will only be concerned with arc systems with no parallel arcs, we will not consider chord diagrams with parallel chords.
We may identify $k$-filling systems for the surface $\surface_g^1$ with (properly) labelled chord diagrams with $2g+k$ chords, $k + 1$ cycles and no parallel chords. 


As an oriented $k$-filling system gives an oriented cell in $\AC / \AC_\infty$, we may take its boundary in the chain complex $\fsys_\ast$ by taking the boundary of the corresponding oriented simplex in $\AC$ and projecting that boundary back to $\AC / \AC_\infty$.  The boundary of a simplex is an alternating sum of the codimension-1 faces.  Thus the boundary of a $k$-filling system $\alpha$ will be a linear combination of all the $(k-1)$-filling systems obtained from $\alpha$ by removing one arc.  Removing certain arcs from $\alpha$ may leave an arc system which no longer fills $\surface_g^1$. The simplex of $\AC$ for such an arc system is contained in $\AC_\infty$ and hence is trivial in $\fsys_{k-1}$.  In the language of chord diagrams we will be able to tell when this has happened by counting cycles and applying Equation~(\ref{eq:euler}).

The mapping class group $\Mod(\surface_g^1)$ acts on the fundamental group of the closed surface of genus $g$.  This gives a left action of $\Mod(\surface_g^1)$ on labelled chord diagrams by modifying the labels.  This action of $\Mod(\surface_g^1)$ preserves the underlying unlabelled chord diagram for a $k$-filling system.  Conversely, by the usual change of coordinates principle, any homeomorphism of fat graphs extends to a homeomorphism of the surface so the mapping class group is transitive on the set of proper labellings for a fixed unlabelled chord diagram.  Thus the $\Mod(\surface_g^1)$-orbits of $k$-filling systems correspond precisely to unlabelled chord diagrams with $2g+k$ chords and $k + 1$ cycles.  
 
\begin{example}
 Figure~\ref{fig:gen_2} depicts the $4$ orbits of the action of $\Mod(\surface_2^1)$ on the set of $0$-filling systems for the surface, $\surface_2^1$.  These are all $4$-chord diagrams with one cycle and (redundantly) no parallel chords.
\end{example}
\begin{figure}[!ht]
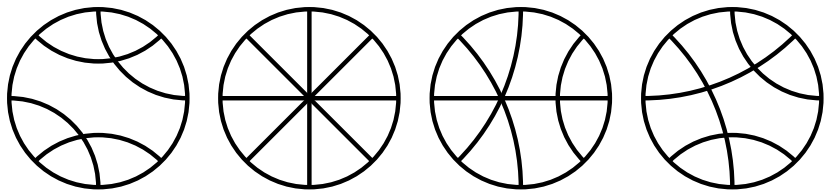

\begin{center}
\resizebox{0.75\textwidth}{!}{%
\begin{minipage}[c]{0.9in}
\begin{center}
\includegraphics[scale=1.0]{g2-1.epsi}
\end{center}
\end{minipage}
\begin{minipage}[c]{0.9in}
\begin{center}
\includegraphics[scale=1.0]{g2-2.epsi}
\end{center}
\end{minipage}
\begin{minipage}[c]{0.9in}
\begin{center}
\includegraphics[scale=1.0]{g2-3.epsi}
\end{center}
\end{minipage}
\begin{minipage}[c]{0.9in}
\begin{center}
\includegraphics[scale=1.0]{g2-4.epsi}
\end{center}
\end{minipage}
}
\end{center}
\caption{\label{fig:gen_2}The unlabelled $4$-chord diagrams corresponding to the four $\Mod(\surface_g^1)$-orbits of $0$-filling systems in the genus $2$ surface with one marked point.}
\end{figure}


\subsection{A singleton generating set for the Steinberg module}
\label{sec:single_gen}

As shown in \S \ref{sec:st_pres} we have the $\Mod(\surface_g^1)$-module isomorphism from Proposition~\ref{prop:st_pres}
\begin{equation*}\St(\surface_g^1) \cong \frac{\fsys_0}{ \partial \fsys_1}.\end{equation*}
Although $\fsys_0$ and $\fsys_1$ are finitely generated $\Mod(\surface_g^1)$-modules, the number of $\Mod(\surface_g^1)$-orbits of $0$-filling systems grows very quickly as a function of $g$ (see \cite{HZ1986}).  It is therefore somewhat surprising that the Steinberg module is generated by the class of a single $0$-filling system.
\begin{theorem}[The Steinberg module is cyclic]
\label{thm:single_gen}
Let $g \geq 1$, and let $\phi_0 \in \fsys_0$ be the $0$-filling system given in Figure~\ref{fig:st_gen}.  Let $[\phi_0]$ be the class of $\phi_0$ in $\St(\surface_g^1) = \fsys_0 / \partial \fsys_1$.  Then $\St(\surface_g^1)$ is generated as a $\Mod(\surface_g^1)$-module by $[\phi_0]$.  For the closed surface let $[\phi_0]$ be the class of $\phi_0$ in $\St(\surface_g) = (\fsys_0)_P / \partial (\fsys_1)_P$ ({\em cf.} proof of Proposition~\ref{prop:st_pres}).  Similarly, $\St(\surface_g)$ is generated as a $\Mod(\surface_g)$-module by $[\phi_0]$.
\end{theorem}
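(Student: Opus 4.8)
The plan is to work with the presentation $\St(\surface_g^1) \cong \fsys_0/\partial\fsys_1$ from Proposition~\ref{prop:st_pres} and give a reduction algorithm that expresses the class $[\psi]$ of an arbitrary oriented $0$-filling system as a $\Z\Mod(\surface_g^1)$-linear combination of translates of $[\phi_0]$, where $\phi_0$ is the specific $0$-filling system of Figure~\ref{fig:st_gen}. Since $\fsys_0$ is generated as a $\Mod(\surface_g^1)$-module by finitely many orbit representatives of $0$-filling systems (equivalently, by unlabelled $2g$-chord diagrams with one cycle and no parallel chords), it suffices to handle one representative per orbit. First I would set up a complexity measure on $0$-filling systems — for instance a function of the chord diagram that decreases under the reduction move — and designate the diagram of $\phi_0$ as the unique diagram of complexity zero, the base case of the induction.

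The engine of the argument is a relation in $\St(\surface_g^1)$ coming from the $1$-filling systems: for each oriented $1$-filling system $\rho$ (a $(2g+1)$-chord diagram with two cycles, no parallel chords) we have $\partial\rho = 0$ in $\St(\surface_g^1)$, and $\partial\rho$ is the alternating sum of the $2g+1$ arc systems obtained by deleting one arc from $\rho$, those that fail to fill being zero. So a chosen $\rho$ lets me rewrite the class of one of its $0$-filling faces in terms of the others (plus possibly-vanishing terms), and the key combinatorial task is: given a $0$-filling system $\psi$ of positive complexity, produce a $1$-filling system $\rho$ — by adding one carefully placed arc to $\psi$, or by modifying $\psi$ — such that the relation $\partial\rho=0$ lets me trade $[\psi]$ for a $\Z\Mod$-combination of classes of strictly lower-complexity $0$-filling systems. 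Concretely I expect this to amount to the chord-diagram move of "sliding" or "flipping" a chord endpoint across an adjacent outer edge: adding a parallel companion to a chord creates a $1$-filling system one of whose faces is the original diagram and another of whose faces is the slid diagram, while the remaining faces either do not fill (and vanish) or are lower in complexity. One must check that the slid diagram still has one cycle and no parallel chords, or else further reduce; this bookkeeping, organized along Euler-characteristic counting via equation~(\ref{eq:euler}), is where the real work lies.

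The main obstacle, and the step I would expect to consume most of the proof, is designing the complexity function and verifying that some legal reduction move is always available and strictly decreases it — i.e.\ that the chord-diagram of $\phi_0$ is the only local minimum. This is the analogue of the Ash–Rudolph reduction for $\SL(n,\Z)$, and as there the difficulty is combinatorial rather than conceptual: one must rule out "stuck" configurations. I would likely stratify by the cyclic structure of the chord diagram (number of nested versus linked chords, lengths of cycles in intermediate deletions) and argue that if no single slide works then a short sequence of slides does, possibly after first applying a mapping class to put the diagram in a normal form. A clean way to package this is to show every $2g$-chord one-cycle diagram can be connected to the diagram of $\phi_0$ through a sequence of such moves — essentially a connectivity statement for a graph on chord diagrams — and that each edge of that graph corresponds to a relation $\partial\rho = 0$; then $[\psi] = \pm[\phi_0]$ up to a $\Mod(\surface_g^1)$-translate plus lower-order (already-reduced) terms, and induction on complexity finishes it. I would treat the low-genus cases $g=1,2$ by hand first (as the paper does in \S\ref{sec:g1andg2}) to calibrate the complexity function before stating it in general.

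For the closed surface, once the marked-point case is done there is nothing further to prove: by the second half of Proposition~\ref{prop:st_pres}, $\St(\surface_g) \cong (\fsys_0)_P/\partial(\fsys_1)_P$ is the image of $\St(\surface_g^1)$ under the quotient map induced by forgetting the marked point, so the image of $[\phi_0]$ generates $\St(\surface_g)$ as a $\Mod(\surface_g)$-module because $[\phi_0]$ generates $\St(\surface_g^1)$ as a $\Mod(\surface_g^1)$-module and $\Z\Mod(\surface_g^1) \surjection \Z\Mod(\surface_g)$ is surjective.
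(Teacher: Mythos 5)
Your general plan is the right one — use the relations $\partial\rho = 0$ coming from $1$-filling systems to perform a reduction on $0$-filling systems, and push the closed-surface case through the surjection $\Z\Mod(\surface_g^1)\surjection\Z\Mod(\surface_g)$ and Proposition~\ref{prop:st_pres}. The closed-surface paragraph is correct and matches the paper. But there is a genuine gap in the main reduction, and it is exactly the step you flag as ``where the real work lies.'' You propose to design a complexity function and rule out stuck configurations by a stratified case analysis of slide moves, and you leave that open. The paper sidesteps all of this with one structural observation you do not have: Proposition~\ref{prop:connected}, that the class of any \emph{disconnected} $0$-filling system is already zero in $\St(\surface_g^1)$. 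The proof is a one-line application of your $\partial\rho = 0$ engine — add a separating chord $c$ that crosses nothing and separates the diagram into two nonempty halves; then every face of $\partial\alpha_c$ containing $c$ has too many cycles (so fails to fill and vanishes in $\fsys_0$), leaving $\partial\alpha_c = \pm\alpha$. Once this is known, the induction is trivial: grow a ``salient tail'' one chord at a time (Proposition~\ref{prop:salient}); at each step the boundary relation replaces $\alpha$ by diagrams that either are disconnected (hence zero) or have a strictly longer salient tail, so after at most $2g$ steps everything is salient, and the unique connected salient diagram is $\phi_0$.

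Two concrete problems with your description of the reduction move. First, ``adding a parallel companion to a chord'' does not produce a legal $1$-filling system — the paper explicitly excludes chord diagrams with parallel chords, since arc systems have no parallel arcs; the paper's Proposition~\ref{prop:connected} in fact has to rule out precisely the possibility that the added separating chord $c$ is forced to be parallel to an existing chord, which it does using the fact that a $0$-filling system has a single cycle. Second, your proposed ``complexity function'' is pointed in the wrong direction: you want a quantity that \emph{decreases}, whereas the natural monotone quantity in the paper's argument (salient tail length) \emph{increases}, and the termination comes from the bound $2g$ on the number of chords together with the vanishing of disconnected terms, not from a well-ordering argument over a graph of slide moves. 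Without the disconnected-implies-trivial lemma, a single boundary relation $\partial\rho = 0$ gives you a sum of $2g$-many $0$-filling systems with no obvious reason any of them should be simpler than the one you started with, and your appeal to a connectivity-of-a-graph-of-moves picture would still need a separate argument that the relations you follow along a path compose to express $[\psi]$ in terms of $[\phi_0]$ rather than merely connecting the diagrams combinatorially.

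\end{document}
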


Theorem~\ref{thm:single_gen} will be proved using Propositions~\ref{prop:salient} and \ref{prop:connected} below.

\begin{corollary}
\label{cor:notriv}
$[\phi_0] \in \St(\surface_g^1)$ (resp. $[\phi_0] \in \St(\surface_g)$) is nontrivial for $g \geq 1$.
\end{corollary}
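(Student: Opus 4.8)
\textbf{Proof proposal for Corollary~\ref{cor:notriv}.}

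The plan is to show that $[\phi_0]$ is nonzero in $\St(\surface_g^1) = \widetilde{\HH}_{2g-2}(\CC(\surface_g^1);\Z)$ by exhibiting a nonzero value under some natural functional, rather than by any direct computation in the quotient $\fsys_0/\partial\fsys_1$. The cleanest route exploits Theorem~\ref{thm:single_gen} together with the known rank of $\St$. First I would recall that, by Theorem~\ref{thm:har_spheres} (or Harer's computation), $\St(\surface_g^1)$ is a \emph{free abelian group of infinite rank}; in particular it is nonzero. If $[\phi_0]$ were zero, then since $\St(\surface_g^1)$ is generated as a $\Mod(\surface_g^1)$-module by $[\phi_0]$ (Theorem~\ref{thm:single_gen}), the whole module $\St(\surface_g^1)$ would be zero, contradicting that it is infinitely generated. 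The same argument applies verbatim to the closed surface using the second half of Theorem~\ref{thm:single_gen} and Lemma~\ref{lem:same_module}. This is a one-line deduction and is essentially the only thing Corollary~\ref{cor:notriv} is asserting — it is a formal consequence of cyclicity plus nonvanishing of the ambient module.

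There is one subtlety to check: the corollary as stated includes $g=1$, so I should make sure $\St(\surface_1^1) = \widetilde{\HH}_0(\CC(\surface_1^1);\Z)$ is genuinely nonzero. For $g=1$ the curve complex $\CC(\surface_1^1)$ is the Farey complex, which is connected but has infinitely many vertices; its reduced $0$-th homology is zero since it is connected — wait, this needs care. Actually $\CC(\surface_1^1)$ is $1$-dimensional and homotopy equivalent to a wedge of $0$-spheres only if disconnected; the standard statement is that it is homotopy equivalent to a wedge of circles in dimension $2g-2=0$... I would double-check the convention: Harer's statement gives the homotopy type of a wedge of spheres of dimension $2g-2$, which for $g=1$ is dimension $0$, i.e.\ a discrete set, so $\CC(\surface_1^1)$ must be treated with the appropriate (possibly modified) definition of the curve complex in genus $1$, under which it is indeed a nonempty disconnected space and $\widetilde{\HH}_0 \neq 0$. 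So the genus-$1$ case reduces to citing the correct low-genus convention, and I would make that explicit in the write-up.

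The \textbf{main obstacle} is therefore not a hard computation but a bookkeeping point: I must confirm that the infinite-rank statement for $\St(\surface_g^1)$ already appears in, or follows from, the results cited in the excerpt (Theorem~\ref{thm:har_spheres} and the introduction's discussion of Harer's wedge-of-spheres result), and that it is valid for all $g\ge 1$ including the genus-$1$ convention. Once that is in hand, the corollary is immediate: a nonzero module cannot be generated by a zero element. I would phrase the final proof as: ``By Theorem~\ref{thm:single_gen}, $\St(\surface_g^1)$ is generated by $[\phi_0]$; since $\St(\surface_g^1) = \widetilde{\HH}_{2g-2}(\CC(\surface_g^1);\Z)$ is a nonzero (indeed infinitely generated) abelian group by Theorem~\ref{thm:har_spheres} and equation~(\ref{eq:inf_def}), the generator $[\phi_0]$ must be nonzero. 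The closed-surface case follows identically using Lemma~\ref{lem:same_module}.''
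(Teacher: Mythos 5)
Your argument is correct, but it takes a different route than the paper. Both proofs use Theorem~\ref{thm:single_gen} to observe that if $[\phi_0]=0$ then $\St(\surface_g^1)=0$, but they differ in how they rule that out. You appeal directly to Harer's result that $\widetilde{\HH}_{2g-2}(\CC(\surface_g^1);\Z)$ is a wedge of infinitely many spheres, hence nonzero; this is entirely legitimate (and is indeed stated in the paper's introduction, citing Harer and Ivanov--Ji). The paper instead argues via duality: if $\St$ were trivial, then a torsion-free finite-index subgroup $\Gamma<\Mod(\surface_g^1)$ would have trivial dualizing module, forcing $\Gamma$ (and hence $\Mod(\surface_g^1)$) to be finite, a contradiction. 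The paper's route is slightly ``softer'' in that it avoids invoking the precise infinite-wedge computation, using only the duality-group framework and the infinitude of $\Mod(\surface_g^1)$. Your route is more direct if one is willing to cite the infinite rank of $\St$.

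One small caution on your $g=1$ aside: the curve complex $\CC(\surface_1^1)$ under the paper's (Harvey's original) definition is just a discrete set of infinitely many points, since no two distinct essential curves on the once-marked torus can be realized disjointly. So it is \emph{disconnected}, $\widetilde{\HH}_0\neq 0$, and no modified definition is needed for this paper's purposes; your initial statement that it is ``the Farey complex, which is connected'' conflates the curve complex with the arc complex $\AC(\surface_1^1)$ (the latter has the Farey graph as its $1$-skeleton, per Figure~\ref{fig:farey}). The corrected version you arrive at is fine, but you should strike the wrong first guess.
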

\begin{proof}[Proof of Corollary~\ref{cor:notriv}]
If $[\phi_0]$ were trivial then by Theorem~\ref{thm:single_gen} the Steinberg module $\St(\surface_g^1)$ would be trivial.  But then $\Mod(\surface_g^1)$ would have a finite index subgroup $\Gamma$ whose dualizing module is trivial.  This would imply that $\Gamma$ must be the trivial group and hence that $\Mod(\surface_g^1)$ is finite (see \cite{IJ2007} for more).
 \end{proof}

\begin{figure}[!ht]
\begin{center}
\includegraphics[angle=100]{generator.epsi}
\put(-62,82){$x_1$}
\put(-52,66){$x_2$}
\put(-44,46){$x_3$}
\put(-50,28){$x_4$}
\put(-80,82){$x_{2g}$}
\put(-92,66){$x_{2g-1}$}
\put(-99,46){$x_{2g-2}$}
\put(-92,28){$x_{2g-3}$}
\put(-79,0){\huge$\cdots$}
\put(-71,-24){\large $\phi_0$}
\end{center}
\caption{The generator for $\St(\surface_g^1)$\label{fig:st_gen}}
\end{figure}
\begin{remark}
\label{rem:guess}
A natural first guess at a single generator for $\St(\surface_g^1)$ is the arc system coming from the standard identification of the the $4g$-gon which corresponds to the labelled chord diagram in Figure~\ref{fig:guess}, but in fact by Proposition~\ref{prop:connected} below, the class of this $0$-filling system is trivial in $\St(\surface_g^1)$.
\end{remark}
\begin{figure}[!ht]
\begin{center}
\includegraphics[angle=100]{guess.epsi}
\put(-62,82){$y_1$}
\put(-44,66){$y_2$}
\put(-36,44){$y_3$}
\put(-48,23){$y_4$}
\put(-80,82){$y_{2g}$}
\put(-99,64){$y_{2g-1}$}
\put(-106,42){$y_{2g-2}$}
\put(-92,22){$y_{2g-3}$}
\put(-79,0){\huge$\cdots$}
\end{center}
\caption{A trivial class in $\St(\surface_g^1)$\label{fig:guess}}
\end{figure}

A filling arc system describes a decomposition of the surface $\surface_g^1$ into polygons.  If any of the polygons has more than $3$ vertices, one may add an arc connecting two non-adjacent vertices of that polygon to get a filling arc system with one more arc.  This same process can be done from the point of view of chord diagrams.  Given a $k$-filling system $\alpha$ in $\surface_g^1$ represented as a labelled chord diagram, the outer edges of $\alpha$ are partitioned into $k+1$ disjoint sets (corresponding to the polygons mentioned above) according to which of the $k+1$ cycles they belong to.  A $(k+1)$-filling system can be created by adding a new chord (not parallel to any of the chords of $\alpha$) connecting any two outer edges of $\alpha$ in the same cycle.  Note that since a $0$-filling system has a single cycle, \emph{any} new (non-parallel) chord gives a $1$-filling system.

Two chords in a chord diagram \emph{cross} if the cyclic order on their endpoints forces them to.  The finest partition of the set of chords in which each pair of crossing chords is in the same set will give the \emph{connected components} of the chord diagram.  For example the chord diagram in Figure~\ref{fig:st_gen} is connected, while the chord diagram in Figure~\ref{fig:guess} has $g$ connected components and so is disconnected for $g >1$.   
A cycle {\em traverses} a chord if it follows along the chord.  Note that a cycle can traverse a chord 0, 1, or 2 times.  Given a cycle $\sigma$ and a chord $c$ on a chord diagram $\alpha$ we will say that{ \em $\sigma$  remains on one side of $c$} if the cycle $\sigma$ never traverses any chords that cross $c$.  Each chord $c$ in a chord diagram cuts the chord diagram into two sets whose closures are disks.  Notice that the cycle $\sigma$ remains on one side of $c$ precisely when $\sigma$ is contained in one of those disks.  Finally note that each chord $c$ is either traversed twice by a single cycle which must also traverse a chord which crosses $c$ or is traversed once by two different cycles.

\begin{proposition}
\label{prop:connected}
 If $\alpha$ is a disconnected $0$-filling system then $[\alpha]$ is trivial in $\St(\surface_g^1) = \fsys_0 / \partial \fsys_1$.
\end{proposition}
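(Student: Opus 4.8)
The plan is to produce, for each disconnected $0$-filling system $\alpha$, a single $1$-filling system $\beta\in\fsys_1$ with $\partial\beta=\pm\alpha$ in $\fsys_0$; this immediately gives $[\alpha]=0$ in $\St(\surface_g^1)=\fsys_0/\partial\fsys_1$. Since in genus $1$ there is, up to $\Mod(\surface_1^1)$, only one $0$-filling system and it is connected, I may assume $g\ge 2$. View $\alpha$ as an unlabelled chord diagram on the $4g$-gon. I would first record the (standard) combinatorial fact that a disconnected chord diagram decomposes as a concatenation along two complementary arcs of the boundary circle: the chords split into two nonempty families, one supported on a block of consecutive positions $p,p+1,\dots,q$ and the other on the complementary block, with no chord of one family crossing a chord of the other. (Each family has at least two chords, since a single chord on two consecutive positions is not allowed.) Let $e$ be the outer edge between positions $p-1$ and $p$, let $f$ be the outer edge between $q$ and $q+1$, and let $d$ be the new chord (arc) joining $e$ to $f$.

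Next I would check that $\beta:=\alpha\cup\{d\}$ is a genuine $1$-filling system, hence lies in $\fsys_1$. Regard $d$ as a diagonal of the $4g$-gon $Q$ that $\alpha$ cuts out of $\surface_g^1$; it splits $Q$ into two sub-polygons, one with $2k+1\ge 5$ sides and one with $2(2g-k)+1\ge 5$ sides, where $2k$ is the size of the block. Since $Q$ is the only component of $\surface_g^1\smallsetminus\alpha$, any monogon or bigon that $d$ would bound together with arcs of $\alpha$ must be one of these two sub-polygons, which is impossible; so $d$ is essential and not parallel to any arc of $\alpha$. Moreover $\alpha$ has a single cycle, so adjoining $d$ — whose two ends attach to outer edges of that cycle — splits it in two, and by (\ref{eq:euler}) the resulting arc system $\beta$, having $2g+1$ arcs and $2$ cycles, fills $\surface_g^1$.

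Then I would compute $\partial\beta$ in $\mathrm{C}_\ast(\AC/\AC_\infty)$, ordering the arcs of $\beta$ so that $d$ comes last: $\partial\beta=\pm\alpha+\sum_{a\in\alpha}\pm\bigl((\alpha\smallsetminus a)\cup\{d\}\bigr)$, each summand being $0$ in $\fsys_0$ whenever the $2g$-arc system in question fails to fill $\surface_g^1$. The heart of the matter is that every summand with $a\in\alpha$ is of this zero type. Deleting one arc from the $0$-filling system $\alpha$ yields a $(2g-1)$-arc system with exactly two cycles (it cannot have one, by the Euler count (\ref{eq:euler}), and removing a chord changes the cycle count by one), these two cycles being obtained by cutting the unique cycle of $\alpha$ at the two occurrences of $a$. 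By the concatenation structure, both occurrences of $a$ lie in the same one of the two segments into which $e$ and $f$ divide that cycle — the block segment if $a$ belongs to the block family, the complementary segment otherwise — so $e$ and $f$ stay on a common cycle of $\alpha\smallsetminus a$; adjoining $d$ then splits that cycle rather than merging two, giving three cycles, and by (\ref{eq:euler}) the resulting $2g$-arc system does not fill $\surface_g^1$. Hence $\partial\beta=\pm\alpha$ in $\fsys_0$, and $[\alpha]=0$.

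The genuinely delicate points are the first and third paragraphs: establishing the concatenation fact, and then checking the ``non-separation'' property, including the boundary cases where $a$ is incident to one of the outer edges $e$ or $f$, whose deletion merges that edge with a neighbour — here one must confirm that $e$ and $f$ are not thereby identified and still lie on a common cycle. Once the block decomposition is fixed, the rest is careful accounting with cycles via equation (\ref{eq:euler}), exactly the kind of reduction (in the spirit of Ash--Rudolph) that this section is set up to perform.
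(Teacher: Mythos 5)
Your proof is correct and follows essentially the same approach as the paper's: introduce a separating chord ($d$ in your notation, $c$ in the paper's), argue that $\alpha\cup\{d\}$ is a genuine $1$-filling system, and show that its boundary kills every $2g$-arc term except $\pm\alpha$ because the separating chord leaves $\ge 2$ cycles after any other deletion. Where the paper verifies "no parallel chord'' and the boundary computation via the single observation that a non-crossing chord with chords on both sides has its two sides on distinct cycles, you instead do an explicit polygon/side count in the $4g$-gon and track cycle splittings directly; these are equivalent bookkeeping for the same idea, and your extra care with the boundary cases (where the deleted chord is incident to $e$ or $f$) is accurate but not strictly necessary.
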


\begin{proof}
Add a chord $c$ to $\alpha$ which does not cross any of the chords of $\alpha$ and such there are nonempty connected components of $\alpha$ on both sides of $c$.  Let $\alpha_c$ be the resulting chord diagram.  Firstly, we claim that $\alpha_c$ is a $1$-filling system.  This will be the case unless the chord $c$ is parallel to some chord $c'$ of $\alpha$.  Suppose that such a chord $c'$ exists.  Then $c'$ crosses the same (empty) set of chords as $c$.  It follows that $\alpha$ has a chord $c'$ which is traversed by two {\em different} cycles.  Thus $\alpha$ has at least two cycles contradicting the assumption that $\alpha$ is a $0$-filling system.

Now since $\alpha_c$ is a $1$-filling system we may orient it and take its boundary.  Next we claim
\begin{equation}
\label{eq:connection_relation}
\partial \alpha_c = \pm \alpha.
\end{equation}
By definition $ \partial \alpha_c$ is a linear combination of the $0$-filling systems that one can obtain by removing a single chord from $\alpha_c$.  No chords of $\alpha_c$ cross the chord $c$ so any chord diagram $\beta$ obtained from $\alpha_c$ by removing a chord other than $c$ cannot have a single cycle which traverses $c$ twice.   Such a chord diagram $\beta$ must have two different cycles which traverse $c$, so $\beta$ cannot be a $0$-filling system.  Thus $\alpha$ is the only $0$-filling system that one may get by removing a chord from $\alpha_c$.  Equation~(\ref{eq:connection_relation}) follows establishing the proposition.
 \end{proof}

Now we will introduce a subset of the $0$-filling systems whose classes generate $\St(\surface_g^1)$ and for which, by Proposition~\ref{prop:connected}, the class of each element, save one, is trivial.
\begin{figure}[!ht]
\begin{center}
\includegraphics[scale=0.4]{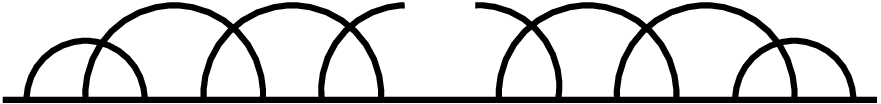}
\put(-92,4){\large$\cdots$}
\end{center}
\caption{A connected component of a salient chord diagram\label{fig:salient}}
\end{figure}
A chord diagram will be called \emph{salient} if each of its connected components is of the form given in 
Figure~\ref{fig:salient}.
Of course $\phi_0$ from Figure~\ref{fig:st_gen} is the unique connected, salient $0$-filling system.

\begin{proposition}
\label{prop:salient}
 $\St(\surface_g^1)$ is generated by salient $0$-filling systems.
\end{proposition}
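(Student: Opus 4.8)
The plan is to run a reduction algorithm, modelled on the determinant reduction of Ash--Rudolph \cite[Theorem~4.1]{AR1979}, showing that the class of \emph{every} $0$-filling system can be rewritten, modulo $\partial\fsys_1$, as a $\Z$-linear combination of classes of salient $0$-filling systems. By Proposition~\ref{prop:st_pres} we have $\St(\surface_g^1)\cong\fsys_0/\partial\fsys_1$, and $\fsys_0$ is spanned as an abelian group by the oriented $0$-filling systems, so it suffices to show that $[\alpha]$ lies in the subgroup generated by the salient classes for each $0$-filling system $\alpha$. I would set this up as an induction on a complexity $c(\alpha)\in\Z_{\geq0}$ of the underlying chord diagram, chosen so that $c(\alpha)=0$ exactly when $\alpha$ is salient; a natural candidate is the number of chord crossings of $\alpha$ minus the number of crossings of the salient diagram with the same multiset of component sizes (so that the salient chain of Figure~\ref{fig:salient} is the unique minimizer within each "shape class"), refined by a secondary invariant if needed to make a single reduction move strictly monotone. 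Disconnected $\alpha$ need no work: either they are already salient, or $[\alpha]=0$ in $\St(\surface_g^1)$ by Proposition~\ref{prop:connected}; so the induction only has to treat connected, non-salient $\alpha$.

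The engine is the relation $[\partial\beta]=0$ in $\St(\surface_g^1)$ that holds for every oriented $1$-filling system $\beta$ (since $\beta\in\fsys_1$). As in the proof of Proposition~\ref{prop:connected}, a $0$-filling system $\alpha$ has exactly one cycle, so adjoining to it any chord $c$ of the $2g$-gon that is not parallel to a chord of $\alpha$ produces a $1$-filling system $\beta=\alpha\cup\{c\}$. Taking boundaries,
\begin{equation*}
\partial\beta=\pm\,\alpha+\sum_i\pm\,\alpha_i ,\qquad \alpha_i=(\alpha\setminus\{c_i\})\cup\{c\},
\end{equation*}
where $i$ ranges over those chords $c_i$ of $\alpha$ for which $\alpha_i$ still has a single cycle; the other deletion terms lie in $\AC_\infty$ and so vanish in $\fsys_0$, and whether a term survives is read off from the cycle count via equation (\ref{eq:euler}). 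Passing to $\St(\surface_g^1)=\fsys_0/\partial\fsys_1$ gives $[\alpha]=\mp\sum_i\pm\,[\alpha_i]$. Thus the entire problem is reduced to the following combinatorial claim: for every connected, non-salient $\alpha$ one can choose the added chord $c$ so that each surviving $\alpha_i$ is either disconnected (hence $[\alpha_i]=0$ by Proposition~\ref{prop:connected} and may be dropped) or connected with $c(\alpha_i)<c(\alpha)$. Granting this, induction on $c$ terminates inside the span of salient diagrams.

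To choose $c$: because $\alpha$ is connected but not the salient chain, its crossing graph is not the path realized by the salient diagram of its size, so some chord $c_0$ of $\alpha$ sits in a "too-tangled" local configuration (for instance crossing three or more chords of $\alpha$, or producing a nesting that breaks the chain pattern). The idea is to adjoin a chord $c$ that takes over the connective role of $c_0$ but in a strictly milder position — $c$ crossing a proper subset of the chords crossed by $c_0$, chosen to still keep the diagram filling — so that $\alpha_{c_0}=(\alpha\setminus\{c_0\})\cup\{c\}$ is a genuinely less tangled $0$-filling system and every other surviving $\alpha_i$ is either disconnected or also strictly simpler. I expect the main obstacle to be exactly this step: proving that such a chord $c$ always exists and that the chosen $c(\cdot)$ is simultaneously monotone under the move and zero only on salient diagrams — this is the analogue of "choosing a lattice vector so all the resulting apartments have smaller determinant'' in \cite{AR1979}, and pinning down the right complexity (possibly needing the secondary invariant) together with the case analysis of how a newly adjoined chord can accidentally become parallel or how a cycle splits is where the real content lies. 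Finally, the statement for the closed surface follows formally from the $\surface_g^1$ case by applying the ring surjection $\Z\Mod(\surface_g^1)\surjection\Z\Mod(\surface_g)$ and passing to $P$-co-invariants, exactly as in the proof of Proposition~\ref{prop:st_pres}, since taking co-invariants preserves the surjection of $\fsys_0$ onto $\St$.
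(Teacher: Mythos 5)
Your framework is the right one: adjoin a chord $c$ to a connected $0$-filling system $\alpha$ to get a $1$-filling system, read off the relation $[\alpha] = \mp\sum_i \pm[\alpha_i]$ from $\partial(\alpha\cup\{c\}) = 0$ in $\St(\surface_g^1)$, discard terms that are disconnected (Proposition~\ref{prop:connected}) or fail to fill, and induct. But the proof is not finished, and you say so yourself. The crossing-based complexity you propose is only a guess; you do not exhibit, for a general connected non-salient $\alpha$, a choice of $c$ that makes the complexity strictly drop on every surviving term; and the case analysis you defer (parallels, cycle splitting, "too-tangled" configurations) is exactly where such a scheme could fail. As written, this is a genuine gap, not a detail.

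The paper sidesteps the whole difficulty by inducting on an \emph{increasing} quantity: the length of a \emph{salient tail}, a maximal boundary segment of the chord diagram matching the chain pattern of Figure~\ref{fig:salient_tail}. Given $\alpha$ with a salient tail of length $n$, one always adds $c$ at the far end of that tail, making $\alpha\cup\{c\}$ have a salient tail of length $n+1$. The key observation you are missing is this: deleting any chord of the tail of $\alpha\cup\{c\}$ other than $c$ disconnects the diagram, so that term is trivial by Proposition~\ref{prop:connected}, while every other surviving term retains the full length-$(n+1)$ tail. There is therefore no chord-selection problem, no monotonicity to verify, and no secondary invariant needed: the tail length strictly increases on every nontrivial term, and after at most $2g$ steps the entire diagram is a salient chain. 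The disconnection argument does the work that your "choose a milder chord" step was supposed to do.
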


\begin{proof}
For $n \geq 0$ a chord diagram will be said to have a \emph{salient tail of length $n$} if a neighborhood of some segment in the boundary of the diagram is homeomorphic to the neighborhood pictured in Figure~\ref{fig:salient_tail}.  
\begin{figure}[!ht]
\begin{center}
\includegraphics[scale=0.4]{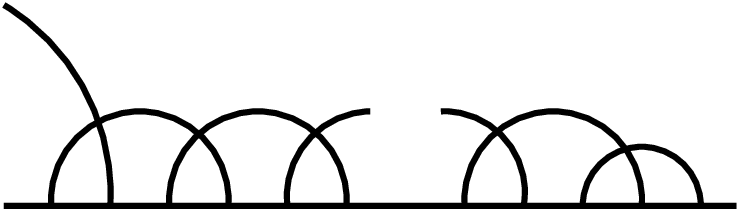}
\put(-120,22){$\overset{\textstyle n \mbox{ chords}}{\overbrace{\hspace{1.55in}}}$}
\put(-71,4){\large$\cdots$}
\end{center}
\caption{A salient tail of length $n$ in a chord diagram\label{fig:salient_tail}}
\end{figure}
Given a $0$-filling system $\alpha$ with a salient tail of length $n$, we can add a chord $c$ to the far right of the salient tail to get a $1$-filling system $\alpha_c$ with a salient tail of length $n+1$.   There is some $m$ with $0 \leq m \leq 2g$ and there are $0$-filling systems $\beta_i$  for $ 1 \leq i \leq m$ such that
\begin{equation*}\partial \alpha_c = \pm \alpha + \sum_{i=1}^{m} \pm \beta_i.\end{equation*}
Notice that if any single chord in the salient tail of $\alpha_c$ other than $c$ is removed from $\alpha_c$ then the resulting chord diagram is disconnected, and so by Proposition~\ref{prop:connected} the class of that $0$-filling system is trivial in $\St(\surface_g^1)$.  Thus for every $i$ with $1 \leq i \leq m$ the class of the $0$-filling system $\beta_i$ is either trivial in $\St(\surface_g^1)$ or the chord diagram for $\beta_i$ has a salient tail of length $n+1$.  Iterating this process recursively no more than $2g$ times we may write the class of any $0$-filling system as a linear combination of classes of salient $0$-filling systems.
 \end{proof}

With Propositions~\ref{prop:salient} and \ref{prop:connected} established, the proof of Theorem~\ref{thm:single_gen} is quite short.
\begin{proof}[Proof of Theorem~\ref{thm:single_gen}]
 By Proposition~\ref{prop:salient}, $\St(\surface_g^1)$ is generated by salient $0$-filling systems.  The element $\phi_0 \in \fsys_0$ is a representative of the unique $\Mod(\surface_g^1)$-orbit of connected, salient $0$-filling systems.  Thus by Proposition~\ref{prop:connected}, $[\phi_0]$ generates $\St(\surface_g^1)$.

For the closed surface without a marked point $\surface_g$ we have $\St(\surface_g) \cong \St(\surface_g^1)$ as $\Mod(\surface_g^1)$-modules, so $[\phi_0]$ generates $\St(\surface_g)$ as a $\Mod(\surface_g^1)$-module.  On the other hand, the $\Mod(\surface_g^1)$-module structure of $\St(\surface_g)$ factors through $\Mod(\surface_g)$, so $[\phi_0]$ generates $\St(\surface_g)$ as a $\Mod(\surface_g)$-module.
 \end{proof}

\begin{remark}
As a consequence of Theorem~\ref{thm:single_gen} there is a left ideal
\begin{equation*}
\mathcal{J} \subset \Z \Mod(\surface_g)
\end{equation*}
such that the sequence of $\Mod(\surface_g)$-modules
\begin{equation}
 0 \to \mathcal{J} \to \Z \Mod(\surface_g) \to \St(\surface_g) \to 0
\end{equation}
is exact.  In theory, for a fixed genus $g$ the presentation in Proposition~\ref{prop:stein_res} gives enough information to calculate a finite generating set for $\mathcal{J}$.  
A more ``geometric'' understanding of $\mathcal{J}$ would certainly advance our understanding of $\St(\surface_g)$.  At minimum one would like to know the stabilizer in $\Mod(\surface_g)$ of the class $[\phi_0] \in \St(\surface_g)$.
\end{remark}


\subsection{Spheres in the curve complex}
\label{sec:curv_sphere}

As illustrated above it is useful to view the Steinberg module as the reduced homology of the arc complex at infinity. On the other hand, the curve complexes $\CC (\surface_g)$ and $\CC (\surface_g^1)$ are more widely studied so we will now explain Harer's homotopy equivalence  $\Psi$ between the arc complex at infinity and the curve complex.  We will use this map to get an explicit nontrivial $2$-sphere in the curve complex of the surface of genus $2$ (see Proposition~\ref{prop:2-gen} below) which will motivate Conjecture~\ref{conj:nontriv} below.  We point out that results of Penner-McShane in \cite{PM2007} provide an alternative approach to converting a $0$-filling system into a sphere in the curve complex.

 Harer \cite[Theorem~3.4]{Harer1986} gives a homotopy equivalence from the arc complex at infinity to the curve complex.  Harer's map
\begin{equation*}\Psi: \AC_\infty^{\circ \circ}(\surface_g^1) \to \CC^{\circ}(\surface_g^1)\end{equation*}
is simplicial, where $\AC_\infty^{\circ \circ}(\surface_g^1)$ denotes the second barycentric subdivision of $\AC_\infty(\surface_g^1)$ and $\CC^{\circ}(\surface_g^1)$ denotes the first barycentric subdivision of $\CC(\surface_g^1)$.  The map $\Psi$ is defined on the vertices of $\AC_\infty^{\circ \circ}(\surface_g^1)$, as follows.  A vertex $v$ of $\AC_\infty^{\circ \circ}(\surface_g^1)$ is a nested sequence of non-filling arc systems $\beta_1 \subset \beta_2 \subset \cdots \subset \beta_k$.  
For $1 \leq i \leq k$ if one removes a small open regular neighborhood of the union of the arcs in $\beta_i$ from $\surface_g^1$, one is left with a surface $\surface(i) \subset \surface_g^1$ with nonempty boundary.  After omitting redundancies and trivial curves, the boundary components of $\surface(i)$ give a curve system $C_i$ in $\surface_g^1$.  We define $\Psi(v) := \bigcup_{i=1}^k C_i$ again omitting redundancies.  In fact, $\Psi(v)$ is a curve system since if $\beta_i \subset \beta_j $ then we can arrange that $\surface(i) \supset \surface(j)$ and hence the curves of $C_i$ can be taken to be disjoint from the curves of $C_j$.  Of course $\Psi(v)$ is a vertex of $\CC^{\circ}(\surface_g^1)$ whose vertices are curve systems.  We then extend $\Psi$ simplicially; that is, the simplex with vertices $v_1, \cdots, v_n$ is mapped linearly to the simplex with vertices $\Psi(v_1), \cdots, \Psi(v_n)$.

The $0$-filling system $\phi_0$ in Figure~\ref{fig:st_gen} gives an arc system whose class in $\HH_{2g-1}(\AC / \AC_\infty ; \Z )$ is nontrivial.  From the long exact sequence for the homology of the pair of spaces $(\AC,\AC_\infty)$ we have the isomorphism
\begin{equation*}\HH_{2g-1}(\AC / \AC_\infty ; \Z ) \stackrel{\partial}{\cong} \widetilde{\HH}_{2g-2}( \AC_\infty ; \Z ).\end{equation*}
In $\AC (\surface_g^1)$ the arc system for $\phi_0$ gives a $(2g-1)$-simplex all of whose proper subfaces are contained in $\AC_\infty (\surface_g^1)$.  The class of $\partial[\phi_0] \in \widetilde{\HH}_{2g-2}( \AC_\infty ; \Z )$ is represented by the boundary of this $(2g-1)$-simplex.

For the surface $\surface_2^1$  we may compute the image of $\partial\phi_0$ under $\Psi$ directly. Figure~\ref{fig:psi_image} gives the image of the vertices of the first barycentric subdivision of $\partial\phi_0$.
\begin{figure}[!ht]
\begin{center}
\resizebox{0.75\textwidth}{!}{%
\includegraphics[scale=0.35]{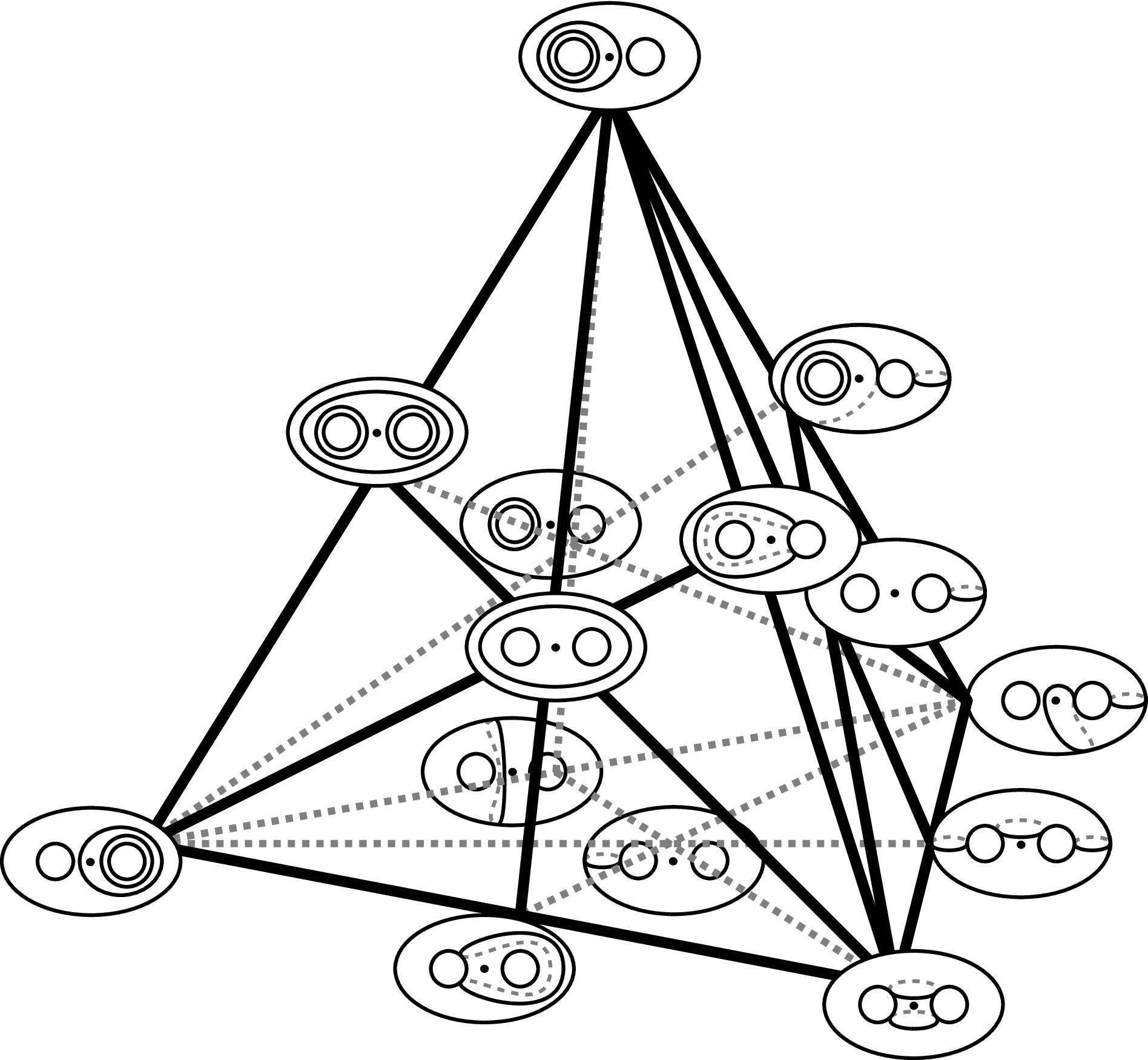}
}
\end{center}
\caption{A nontrivial sphere in the barycentric subdivision $\CC^{\circ}(\surface_2^1)$ of the curve complex\label{fig:psi_image}}
\end{figure}
One may easily fill in  the images for vertices in the second barycentric subdivision by taking unions of the curve systems at the vertices of the first subdivision.  The sphere in Figure~\ref{fig:psi_image} is nontrivial, but it is slightly unsatisfying in that it is specified in $\CC^{\circ}(\surface_2^1)$ and not $\CC(\surface_2^1)$. Figure~\ref{fig:sphere} and Proposition~\ref{prop:2-gen}  below provide a homotopic sphere with a nicer description.  The reader may recognize the shape in Figure~\ref{fig:sphere} to be the boundary of the dual of the $3$-dimensional associahedron (see \cite{Lee1989}).
\begin{figure}[!ht]
\begin{center}
\resizebox{0.75\textwidth}{!}{%
\includegraphics[scale=0.35]{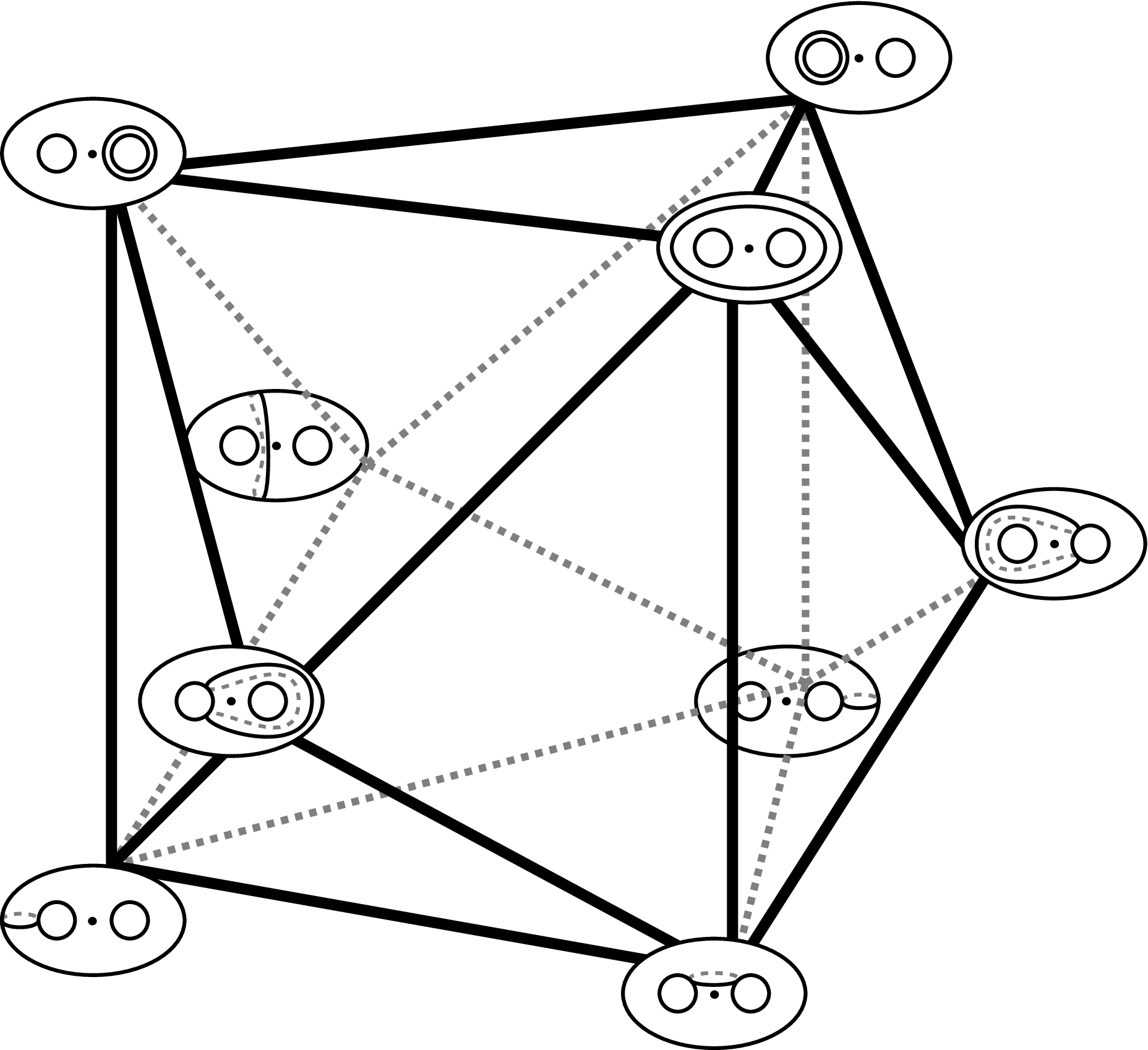}
}
\end{center}
\caption{This is a homologically nontrivial sphere in $\CC(\surface_2^1)$.  Forgetting the marked point gives a homologically nontrivial sphere in $\CC(\surface_2)$.\label{fig:sphere}}
\end{figure}
\begin{proposition}
\label{prop:2-gen}
The $\Mod(\surface_2^1)$-orbit of the homology class of the $2$-sphere in $\CC(\surface_2^1)$ pictured in Figure~\ref{fig:sphere} generates $\HH_2(\CC(\surface_2^1);\Z)$. Forgetting the marked point gives a $2$-sphere in $\CC(\surface_2)$ whose $\Mod(\surface_2)$-orbit generates $\HH_2(\CC(\surface_2);\Z)$.
\end{proposition}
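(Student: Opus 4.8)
The plan is to follow the generator $[\phi_0]$ of Theorem~\ref{thm:single_gen} through Harer's homotopy equivalence $\Psi$ and match it with the pictured sphere. Since $2g-2 = 2 > 0$ we have $\HH_2(\CC(\surface_2^1);\Z) = \widetilde{\HH}_2(\CC(\surface_2^1);\Z) = \St(\surface_2^1)$, and by Theorem~\ref{thm:single_gen} this $\Mod(\surface_2^1)$-module is generated by $[\phi_0]$, where $\phi_0$ is the connected salient $0$-filling system of Figure~\ref{fig:st_gen} (for $g = 2$ it has $2g = 4$ arcs). Under the chain of $\Mod(\surface_2^1)$-equivariant isomorphisms
\begin{equation*}
 \St(\surface_2^1) \cong \HH_{3}(\AC/\AC_\infty;\Z) \stackrel{\partial}{\cong} \widetilde{\HH}_{2}(\AC_\infty;\Z) \stackrel{\Psi_\ast}{\cong} \widetilde{\HH}_2(\CC(\surface_2^1);\Z)
\end{equation*}
coming from equations (\ref{eq:st_working_def}) and (\ref{eq:a_infinity}) and the map $\Psi$, the class $[\phi_0]$ is sent to $\Psi_\ast(\partial[\phi_0])$. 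So it suffices to show that $\Psi_\ast(\partial[\phi_0])$ equals, up to sign and the action of $\Mod(\surface_2^1)$, the homology class $[S]$ of the sphere $S$ in Figure~\ref{fig:sphere}; only the $\Mod(\surface_2^1)$-orbit of this class is needed for the conclusion, so signs will be immaterial.

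Next I would make $\partial[\phi_0]$ completely explicit. The arc system $\phi_0$ spans a $3$-simplex $\Delta$ in $\AC(\surface_2^1)$ all of whose proper faces lie in $\AC_\infty(\surface_2^1)$ (deleting any arc from a $0$-filling system of $\surface_2^1$ leaves $3 < 2g$ arcs, which cannot fill), so $\partial\Delta$ is an embedded triangulated $2$-sphere in $\AC_\infty$ --- the boundary of a tetrahedron on the four arcs of $\phi_0$ --- and it represents $\partial[\phi_0]$. To apply $\Psi_\ast$ one passes to the second barycentric subdivision $(\partial\Delta)^{\circ\circ}$, applies $\Psi$ to each of its vertices (a nested chain $\beta_1 \subset \cdots \subset \beta_k$ of non-filling sub-arc-systems of $\phi_0$, to which $\Psi$ assigns the curve system cut out by the boundary components of $\surface_2^1$ minus a regular neighborhood of $\bigcup \beta_i$), and records the resulting simplicial cycle in $\CC^{\circ}(\surface_2^1)$. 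This is exactly the data displayed in Figure~\ref{fig:psi_image}: a simplicial map of the $2$-sphere $(\partial\Delta)^{\circ\circ}$ into $\CC^{\circ}(\surface_2^1)$ whose image carries $\Psi_\ast(\partial[\phi_0])$.

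Finally I would identify this image with the pictured sphere. The curve systems occurring as vertices in Figure~\ref{fig:psi_image} involve only the finitely many curves bounding the complementary pieces of partial arc subsystems of $\phi_0$, and these curves, together with the compatibilities among them, assemble into the combinatorial $2$-sphere $S$ of Figure~\ref{fig:sphere}, which --- as in the remark following that figure --- is the boundary of the dual of the $3$-dimensional associahedron (see \cite{Lee1989}) and already sits inside $\CC(\surface_2^1)$ itself. Verifying that the simplicial map $(\partial\Delta)^{\circ\circ} \to \CC^{\circ}(\surface_2^1)$, after collapsing the cells on which it degenerates, is a degree-one map onto the subdivision of $S$ then gives $\Psi_\ast(\partial[\phi_0]) = \pm [S]$, which together with the first paragraph proves the first assertion (and $[S]$ is nontrivial, since the module $\HH_2(\CC(\surface_2^1);\Z)$ is nonzero). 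For the closed surface, forgetting the marked point is, by Lemma~\ref{lem:same_module} and Corollary~\ref{cor:factors} (cf.\ Harer \cite[Lemma~3.6]{Harer1986}), a $\Mod(\surface_2^1)$-equivariant homotopy equivalence $\CC(\surface_2^1) \simeq \CC(\surface_2)$, where $\Mod(\surface_2^1)$ acts on $\CC(\surface_2)$ through the quotient $\Mod(\surface_2)$; it carries $[S]$ to the class of the corresponding sphere in $\CC(\surface_2)$, whose $\Mod(\surface_2)$-orbit therefore generates $\HH_2(\CC(\surface_2);\Z)$.

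The step I expect to be the main obstacle is this last identification: carrying out the double barycentric subdivision of the tetrahedron boundary, bookkeeping the many curve systems that $\Psi$ produces, and verifying rigorously that the resulting cycle in $\CC^{\circ}(\surface_2^1)$ is homologous --- not merely visually similar --- to the clean associahedral sphere in $\CC(\surface_2^1)$. This is a finite but delicate combinatorial computation, and orientation conventions must be tracked with care; fortunately, as noted, only the $\Mod(\surface_2^1)$-orbit of the homology class enters the statement, so the sign need not be pinned down exactly.
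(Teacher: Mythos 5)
Your overall strategy matches the paper's: trace the generator $[\phi_0]$ of Theorem~\ref{thm:single_gen} through the isomorphisms $\St(\surface_2^1) \cong \HH_3(\AC/\AC_\infty) \cong \widetilde{\HH}_2(\AC_\infty) \stackrel{\Psi_\ast}{\cong} \widetilde{\HH}_2(\CC(\surface_2^1))$, compute $\Psi(\partial\phi_0)$ on the barycentric subdivision of the tetrahedron boundary to get Figure~\ref{fig:psi_image}, and then identify the result with the associahedral sphere $S$ of Figure~\ref{fig:sphere}.

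However, exactly at the step you flag as the main obstacle --- passing from the cycle $\Psi(\partial\phi_0)$ in $\CC^{\circ}(\surface_2^1)$ to the cleaner sphere $S$ in $\CC(\surface_2^1)$ --- your proposal leaves a genuine gap. Saying the restricted map ``after collapsing the cells on which it degenerates, is a degree-one map onto the subdivision of $S$'' is not an argument; it is a restatement of what must be shown, and the degree framing is not quite the right one anyway since what is actually needed is a homology (or homotopy) in $\CC^{\circ}(\surface_2^1)$ between two $2$-cycles. The paper supplies the missing device: it fixes a linear order on the finite set $Y$ of curves appearing in Figure~\ref{fig:psi_image} (so that the curves of Figure~\ref{fig:sphere} are the largest) and builds an explicit simplicial self-map $f\colon \CC(\surface_2^1)\to\CC(\surface_2^1)$ that fixes all vertices, is the identity off $Y$, and on each simplex meeting $Y$ sends barycenters to the largest vertex in the order. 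This $f$ is homotopic to the identity (by straight-line averaging within each simplex), so $f\Psi(\partial\phi_0)\simeq\Psi(\partial\phi_0)$, and by construction $f$ coalesces the extraneous barycentric vertices onto the curves of $S$, producing exactly the sphere in Figure~\ref{fig:sphere}. Without this (or an equivalent) construction, your proof does not go through: you would need to actually supply the homotopy between the two cycles, and ``a finite but delicate combinatorial computation'' is precisely what the paper's $f$ is designed to avoid. The remainder of your argument (the reduction via Theorem~\ref{thm:single_gen} and the closed-surface case via Lemma~\ref{lem:same_module} / Harer's $\Mod(\surface_g^1)$-equivariant equivalence) is correct.
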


\begin{proof}

The arc system for the generator $[\phi_0] \in \St(\surface_2^1)$ from Theorem~\ref{thm:single_gen} is pictured in Figure~\ref{fig:chord_diagram}.  One may use Harer's map directly to show that the vertices in the first barycentric subdivision of $\partial\phi_0$ are mapped under $\Psi$ to the curve systems pictured in Figure~\ref{fig:psi_image}.  We will give a simplicial sphere in the unbarycentricly subdivided curve complex which is homotopic to the sphere pictured in Figure~\ref{fig:psi_image}.  One may construct a homotopy equivalence
\begin{equation*}f: \CC(\surface_2^1) \to \CC(\surface_2^1)\end{equation*}
as follows.  Let $Y$ be the set of exactly those curves appearing in Figure~\ref{fig:psi_image}.  Fix some linear order on $Y$ so that each of the curves that appear in Figure~\ref{fig:sphere} is greater than all of those that do not. Let $f:\CC(\surface_2^1) \to \CC(\surface_2^1)$ be the
map satisfying the following three properties:
\begin{enumerate}
 \item The map $f$ fixes the vertices of $\CC(\surface_2^1)$.
\item For any simplex $\sigma$ of $\CC(\surface_2^1)$ whose vertices are disjoint from $Y$ the map $f|_\sigma:\sigma \to \sigma$ is the identity.
\item If at least one vertex of the simplex $\sigma$ of $\CC(\surface_2^1)$ is in the set $Y$ then $f|_\sigma:\sigma \to \sigma$ is the map which is linear on the barycentric subdivision of $\sigma$, fixes the vertices of $\sigma$ and sends the barycenter of $\sigma$ to the greatest vertex of $\sigma$ in the order on $Y$.
\end{enumerate}
The map $f: \CC(\surface_2^1) \to \CC(\surface_2^1)$ is homotopic to the identity.  In fact, sending each point of a simplex to weighted averages of itself and its image under $f$ gives the homotopy between $f$ and the identity.  It follows that the sphere $f \Psi(\partial \phi_0)$ is homotopic to $\Psi(\partial \phi_0)$. Some of the vertices in Figure~\ref{fig:psi_image} coalesce under $f$ and we get the simpler homotopic sphere pictured in Figure~\ref{fig:sphere}.  We may then conclude that the $\Mod(\surface_2^1)$-orbits of sphere in Figure~\ref{fig:sphere} generate $\HH_2(\CC(\surface_2^1);\Z)$. Forgetting the marked point gives a $2$-sphere in $\CC(\surface_2)$ whose $\Mod(\surface_2)$-orbit generates $\HH_2(\CC(\surface_2);\Z)$ \cite[Lemma~3.6]{Harer1986}.
 \end{proof}

The number of vertices in the boundary of the first barycentric subdivision of the $(2g-1)$-simplex is $2^{2g}-2$.  Hence, for large $g$ the direct approach to producing homologically nontrivial spheres in $\CC(\surface_g^1)$ given in the proof of Proposition~\ref{prop:2-gen} is impractical.  Moreover, the asymmetry of the arc system for $\phi_0$ is likely to yield a sphere in $\CC(\surface_g^1)$ which is difficult to even describe in any concise form.

\begin{figure}[!ht]
\begin{center}
\resizebox{0.65\textwidth}{!}{%
\includegraphics[scale=0.5]{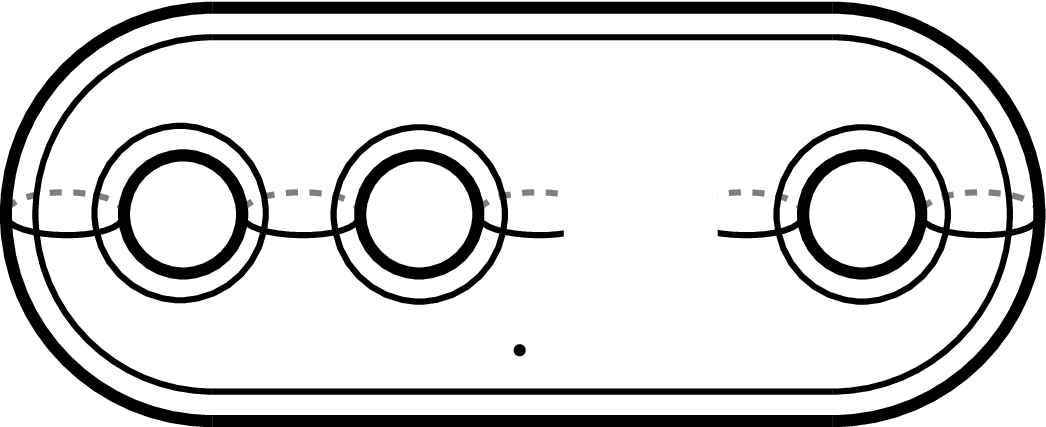}
\put(-108,47){\Large$\cdots$}
}
\end{center}
\caption{Using these $2g+2$ curves one can construct a map from the boundary of the dual of the $(2g-1)$-dimensional associahedron into $\CC(\surface_g^1)$.  Is that map homologically nontrivial?\label{fig:assoc}}
\end{figure}

Using the curves in Figure~\ref{fig:assoc} there is a simple construction of a map of a $(2g-2)$-sphere into $\CC(\surface_g^1)$.  Let $\Theta$ be the boundary of the dual of the $(2g-1)$-dimensional associahedron which we now define.  Fix a regular $(2g+2)$-gon $K$.  By definition $\Theta$ is the simplicial complex whose vertices are diagonals of $K$ and whose simplices are given by sets of disjoint diagonals in $K$. It is well-known that $\Theta$ is a $(2g-1)$-dimensional sphere \cite{Lee1989}.  Associate the vertices of $K$ with the $2g+2$ curves in Figure~\ref{fig:assoc} so that adjacent vertices of $K$ correspond to intersecting curves.  Each pair of nonintersecting curves in Figure~\ref{fig:assoc} corresponds to a diagonal of $K$.  Let
\begin{equation*}q':\Theta \to \CC^\circ(\surface_g^1)\end{equation*}
be the simplicial map sending each pair of nonintersecting curves in Figure~\ref{fig:assoc} to the curve system (with at most $4$ nontrivial curves) consisting of the boundary curves of the surface got by removing an open regular neighborhood of the union of all the other curves.  For each vertex $v$ of $\Theta$ choose a curve $c_v \in q'(v)$ and let
\begin{equation*}q:\Theta \to \CC(\surface_g^1)\end{equation*}
be the simplicial map sending $v$ to $c_v$.  One may construct a homotopy like the one in the proof of Proposition~\ref{prop:2-gen} above to show that $q$ and $q'$ are homotopic and hence the homotopy class of $q$ is independent of the choices of $c_v \in q'(v)$.
\begin{conjecture}
\label{conj:nontriv}
 For $g \geq 1$ the class $[q] \in \widetilde{\HH}_{2g-2}(\CC(\surface_g^1);\Z)$ is nontrivial. (When $g=1$ the proper picture for Figure~\ref{fig:assoc} consists of $2$ pairs of parallel curves.)
\end{conjecture}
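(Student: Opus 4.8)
The plan is to identify $[q]$ with $\pm[\phi_0]$, the generator of $\widetilde{\HH}_{2g-2}(\CC(\surface_g^1);\Z) = \St(\surface_g^1)$ produced by Theorem~\ref{thm:single_gen}; since that class is nonzero by Corollary~\ref{cor:notriv}, the conjecture then follows immediately. This is the genus-$g$ generalization of the argument behind Proposition~\ref{prop:2-gen}, so the content lies in making the bookkeeping work uniformly in $g$. First dispose of $g=1$, which is immediate: $\CC(\surface_1^1)$ is a discrete infinite set of vertices, $\Theta = S^0$, and $q$ sends its two points to distinct vertices, so $[q] \in \widetilde{\HH}_0(\CC(\surface_1^1);\Z)$ is nonzero (this is the degenerate ``two pairs of parallel curves'' picture noted in the conjecture). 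So assume $g \geq 2$.

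For $g \geq 2$ I would first make the top cell of $\phi_0$ explicit: with arcs $x_1,\dots,x_{2g}$ labelled as in Figure~\ref{fig:st_gen}, $\phi_0$ spans a $(2g-1)$-simplex of $\AC(\surface_g^1)$ all of whose proper faces lie in $\AC_\infty(\surface_g^1)$, so the simplicial boundary $\partial\phi_0$ is a fundamental cycle of $\partial\Delta^{2g-1} \cong S^{2g-2}$ inside $\AC_\infty(\surface_g^1)$, and, via Harer's homotopy equivalence $\Psi$ and the identifications of \S\ref{sec:st_pres}, $\Psi(\partial\phi_0)$ represents $\pm[\phi_0]$ in $\widetilde{\HH}_{2g-2}(\CC(\surface_g^1);\Z) = \St(\surface_g^1)$. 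Next I would compute $\Psi$ (which is simplicial on the second barycentric subdivision) on this sphere: a vertex is a chain $\beta_1 \subsetneq \cdots \subsetneq \beta_\ell$ of proper, hence non-filling, subsystems of $\phi_0$, sent to the curve system built from the essential boundary components of the subsurfaces $\surface_g^1 \smallsetminus N(\bigcup \beta_i)$. Then, exactly as in the proof of Proposition~\ref{prop:2-gen}, fix a linear order on the finite set of curves occurring in $\Psi(\partial\phi_0)$ so that each curve appearing in Figure~\ref{fig:assoc} exceeds each that does not, and let $f\colon \CC(\surface_g^1) \to \CC(\surface_g^1)$ be the collapse fixing all vertices and sending each barycenter to the largest vertex of its simplex; $f$ is homotopic to the identity via the weighted-average homotopy. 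The crux is the claim that $f\Psi(\partial\phi_0)$ is precisely the map $q$: one must match the face poset of $\partial\Delta^{2g-1}$, refined by the two barycentric subdivisions and collapsed by $f$, with the poset of sets of pairwise non-crossing diagonals of the $(2g+2)$-gon $K$, compatibly with the maps into $\CC(\surface_g^1)$, so that the curves surviving $f$ are exactly the $2g+2$ curves of Figure~\ref{fig:assoc} and $f\Psi$ agrees with $q$ face by face. Granting this, $[q] = [f\Psi(\partial\phi_0)] = [\Psi(\partial\phi_0)] = \pm[\phi_0] \neq 0$; the passage between $\CC^{\circ}(\surface_g^1)$, where $\Psi$ and $q'$ land, and $\CC(\surface_g^1)$, where $q$ lands, is handled as in Proposition~\ref{prop:2-gen} by the choices $c_v \in q'(v)$ and the independence of the homotopy class from them.

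The main obstacle is precisely this combinatorial matching for general $g$: since $\phi_0$ is deliberately asymmetric (Figure~\ref{fig:st_gen}), a naive enumeration of the curve systems $\Psi(\beta_1 \subsetneq \cdots \subsetneq \beta_\ell)$ is unwieldy, and the heart of a proof is a uniform combinatorial model — ideally an explicit isomorphism, realized through $\Psi$, between a subdivision of $\partial\Delta^{2g-1}$ and the associahedron sphere $\Theta$ — that makes transparent which curves survive $f$ (and why two survivors are disjoint exactly when the corresponding diagonals of $K$ are) and that $f\Psi$ and $q$ agree on each face. A possibly cleaner alternative sidesteps $\phi_0$ entirely: realize the $2g+2$ curves of Figure~\ref{fig:assoc} as the essential boundary curves of a single $0$-filling arc system $\psi$, verify directly that $q \simeq \Psi(\partial\psi)$, and then apply the reduction algorithm behind Propositions~\ref{prop:salient} and~\ref{prop:connected} to show that $[\psi]$ reduces to $\pm[\phi_0]$ — rather than to a disconnected, hence trivial, system — in $\St(\surface_g^1)$. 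A third option is induction on $g$, exploiting the genus-stabilization structure visible in both Figure~\ref{fig:st_gen} and Figure~\ref{fig:assoc} to build the genus-$g$ sphere from the genus-$(g-1)$ one.
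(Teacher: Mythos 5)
The statement you were given is a \emph{conjecture} in the paper, not a theorem: the author establishes only $g=1$ and $g=2$ (via Proposition~\ref{prop:2-gen}) and leaves $g \geq 3$ open, so there is no proof in the paper to compare your attempt against.

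Your proposal is a sketch rather than a proof, and you say so yourself. The crucial step---that $f\Psi(\partial\phi_0)$ coincides with $q$, i.e.\ that the curves surviving the collapse $f$ are exactly the $2g+2$ curves of Figure~\ref{fig:assoc}, and that disjointness among them mirrors non-crossing of diagonals of the $(2g+2)$-gon $K$, so that $f\Psi(\partial\phi_0)$ reproduces the associahedron sphere $\Theta$ mapped in by $q$---is precisely what you flag as ``the crux'' and then assume (``Granting this\ldots''). That combinatorial matching is exactly what the paper's own discussion preceding the conjecture identifies as the obstruction: the $(2^{2g}-2)$-vertex enumeration is impractical for large $g$, and ``the asymmetry of the arc system for $\phi_0$ is likely to yield a sphere in $\CC(\surface_g^1)$ which is difficult to even describe in any concise form.'' This is why the statement is a conjecture rather than a proposition. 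Your $g=1$ reduction is correct, and your general framework (push $\partial\phi_0$ through $\Psi$, collapse via an $f$ homotopic to the identity, then invoke Theorem~\ref{thm:single_gen} and Corollary~\ref{cor:notriv}) is the natural extrapolation of the author's $g=2$ argument; but none of the three concrete routes you outline in your final paragraph is actually carried out, so the conjecture remains open.
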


By Proposition~\ref{prop:2-gen} above the conjecture holds for $g=2$.  It also holds\footnote{Of course the mapping class groups of the surfaces of genus $1$ and $2$ are somewhat exceptional. (For instance they have nontrivial centers \cite[Theorem~7.5.D]{Ivanov2002}.)} for $g=1$.  

\section{Questions for further study}
\label{sec:qandn}

Let $n \geq 2$. The negative of the identity matrix $-I \in \SL(n,\Z)$ stabilizes every subspace of $\Q^n$, so $-I$ is in the kernel of 
the action of $\SL(n,\Z)$ on its Steinberg module $\St(n)$.  Consequently the action of $\SL(n,\Z)$ on $\St(n)$ factors though an action of the simple group $\PSL(n,\Q)$ (see \cite[Theorem~9.3]{Lang2002}).  Simple groups must either act trivially or faithfully.   Since $\St(n)$ is a nontrivial  $\SL(n,\Z)$-module it must also be a nontrivial and hence faithful $\PSL(n,\Q)$-module.  It follows that $\St(n)$ is a faithful $\PSL(n,\Z)$-module.

As we have seen in Corollary~\ref{cor:factors}, for $g \geq 2$ the kernel of the action of $\Mod(\surface_g^1)$ on the Steinberg module contains the infinite point pushing subgroup $P < \Mod(\surface_g^1)$.  For the closed surface without a marked point there is some hope that the action of the mapping class group (modulo its center) on the Steinberg module is faithful.
When $g \in \{1 ,2 \}$, the hyper\-elliptic involution generates the center  $Z=Z(\Mod(\surface_g))$ of the mapping class group and acts trivially on the curve complex.  For $g \geq 3$ the center $Z=Z(\Mod(\surface_g))$ is trivial \cite[Theorem~7.5.D]{Ivanov2002}.  Thus for $g \geq 1$ the action of $\Mod(\surface_g)$ on $\St(\surface_g)$ factors through $\Mod(\surface_g) / Z$.

\begin{question}
For $g \geq 1$ let $Z = Z(\Mod(\surface_g))$ be the center of $\Mod(\surface_g)$. Is $\St(\surface_g)$ a faithful $\Mod(\surface_g) / Z$-module? If not what is the kernel of the action?
\end{question}

The Solomon-Tits Theorem ({\it cf.} \cite{Solomon1969}, \cite[\S IV.5 Theorem 2]{Brown1998}) shows that the Tits building $\TB(n,\Q)$ has the homotopy type of a wedge of spheres, that its reduced homology is a cyclic module (over the associated $\Q$-group), and futher gives a $\Z$-basis for the reduced homology.  Harer has shown that the curve complex has the homotopy type of a wedge of spheres.  Theorem~\ref{thm:single_gen} above shows that the reduced homology of the curve complex is a cyclic $\Mod(\surface_g)$-module. The full ``Solomon-Tits Theorem'' for the mapping class group should also provide the following.

\begin{problem}
 Give a $\Z$-basis for $\St(\surface_g) \cong \St(\surface_g^1)$.
\end{problem}

\medskip
\noindent
{\bf Steinberg modules for automorphism groups of free groups.}
Let $F_n$ be the free group on $n$ generators, and let $\Aut(F_n)$ and $\Out(F_n)$ denote its automorphism group and outer automorphism group respectively (see \cite{Vogtmann2002} for a survey of these groups).  Many of the results on the homological structure of the mapping class group have analogs for $\Out(F_n)$ and $\Aut(F_n)$.   For example, Bestvina and Feighn \cite{BF2000} have shown that $\Out(F_n)$ and $\Aut(F_n)$ are virtual duality groups.  To date, the module structure of the dualizing modules for (torsion-free, finite index subgroups of) these groups has not been studied in depth.

In the case of $\Aut(F_n)$ Hatcher and Vogtmann \cite{HV1998} have proposed a likely candidate for this dualizing module.  They define a simplicial complex based on the poset of free factors of $F_n$ and show that it has the homotopy type of a wedge of spheres.  They define its reduced homology group to be the Steinberg module of $\Aut(F_n)$, and ask if this module provides the dualizing module for finite index, torsion-free subgroups of $\Aut(F_n)$.  It is possible that the approach of the current paper might be used to address this question.

\bigskip

\noindent
{\sc Department of Math\\
Ohio State University\\
231 W 18th Ave.\\
Columbus, OH 43210 \\}
\medskip
{\tt \verb|broaddus@math.ohio-state.edu|}

\end{document}